\newtheorem{theorem}{Theorem}[section]
\newtheorem{proposition}[theorem]{Proposition}
\newtheorem{corollary}[theorem]{Corollary}
\newtheorem{definition}[theorem]{Definition}
\declaretheorem[style=remark,qed=$\Diamond$,Refname={Remark,Remarks},sibling=theorem]{remark}
\declaretheorem[style=remark,qed=$\Diamond$,Refname={Example,Examples},sibling=theorem]{example}
\Crefname{assumption}{Assumption}{Assumptions}
\renewcommand{\epsilon}{\varepsilon}
\newcommand{\paren}[1]{\left(#1\right)}
\newcommand{\ip}[2]{\left\langle #1,~ #2\right\rangle}
\newcommand{\gph}{\operatorname{gph}}
\newcommand{\dom}{\operatorname{dom}}
\newcommand{\epi}{\operatorname{epi}}
\newcommand{\ncone}[1]{{N}_{#1}}
\newcommand{\pncone}[1]{N^{\text{\rm P}}_{#1}} 
\newcommand{\cncone}[1]{N^{\text{\rm C}}_{#1}} 
\newcommand{\lsd}{{\partial\!}}
\newcommand{\csd}{\partial^{\rm C}\!}
\newcommand{\vbar}{{\overline{v}}}
\newcommand{\xbar}{{\overline{x}}}
\newcommand{\ybar}{{\overline{y}}}
\newcommand{\veps}{\varepsilon}
\DeclareMathOperator*{\Limsup}{Lim\,sup}
\title{Characterizations of Super-regularity and its Variants}
\author{Aris Daniilidis\thanks{DIM-CMM, Universidad de Chile, 8370459 Chile.
                              E-Mail:~\href{mailto:arisd@dim.uchile.cl}{arisd@dim.uchile.cl}}
	   \and
	   D. Russell Luke\thanks{Institut f\"ur Numerische und Angewandte Mathematik,
							  Universit\"at G\"ottingen, 37083 G\"ottingen, Germany.
		                      E-Mail:~\href{mailto:r.luke@math.uni-goettingen.de}{r.luke@math.uni-goettingen.de}}
	   \and
	   Matthew K. Tam\thanks{Institut f\"ur Numerische und Angewandte Mathematik,
	   	                    Universit\"at G\"ottingen, 37083 G\"ottingen, Germany.
	   	                    E-Mail:~{\href{mailto:m.tam@math.uni-goettingen.de}{m.tam@math.uni-goettingen.de}}}
}
\begin{document}
	
\maketitle	

\begin{abstract}
Convergence of projection-based methods for nonconvex set feasibility problems has been established for sets with ever weaker regularity assumptions. What has not kept pace with these developments is analogous results for convergence of optimization problems with correspondingly weak assumptions on the value functions.  Indeed, one of the earliest classes of nonconvex sets for which convergence results were obtainable, the class of so-called {\em super-regular sets} \cite{LewisLukeMalick09}, has no functional counterpart. In this work, we amend this gap in the theory by establishing the equivalence between a property slightly stronger than super-regularity, which we call \emph{Clarke super-regularity}, and \emph{subsmootheness} of sets as introduced by Aussel, Daniilidis and Thibault \cite{AusDanThi04}. The bridge to functions shows that approximately convex functions studied by Ngai, Luc and Thera \cite{NgaLucThe00} are those which have Clarke super-regular epigraphs. Further classes of regularity of 
functions based on the corresponding regularity of their epigraph are also discussed.
\end{abstract}

\textbf{Keywords.} super-regularity, subsmoothness, approximately convex

\medskip

\textbf{MSC2010.} 49J53,  
                 26B25,   
                 49J52,    
                 65K10   

\section{Introduction}

The notion of a \emph{super-regular} set was introduced by Lewis, Luke and Malick \cite{LewisLukeMalick09} who 
recognized the property as an important ingredient for proving convergence of the method of alternating projections without convexity. 
This was  generalized in subsequent publications \cite{HesLuk13,BauLukePhanWang13a,DaoPhan17, LukNguTam17}, with the weakest known assumptions 
guaranteeing local linear convergence of the alternating projections algorithm for two-set, consistent feasibility problems 
to date found in \cite[Theorem 3.3.5]{ThaoDiss}.  The regularity assumptions on the individual sets in these subsequent 
works are vastly weaker than super-regularity, but what has not kept pace with these generalizations is their 
functional analogs.  Indeed, it appears that the notion of a {\em super-regular function} has not yet been articulated.  
In this note, we bridge this gap between super-regularity of sets and functions as well as establishing connections to other known function-regularities in the literature.
A missing link is yet another type of set regularity, what we call \emph{Clarke super-regularity}, which is a slightly stronger version of super-regularity and, as we show, this is equivalent to other existing notions of regularity.  
For a general set that is not necessarily the epigraph of a function, we establish an equivalence 
between \emph{subsmoothness} as introduced by Aussel, Daniilidis and Thibault  \cite{AusDanThi04} and Clarke super-regularity. 

\smallskip

To begin, in Section~\ref{s:normal cones and regularity} we recall different concepts of the normal cones to a set 
as well as notions of set regularity, including Clarke regularity (Definition~\ref{Def-Clarke-reg}) and (limiting) 
super-regularity (Definition~\ref{Def-super-reg}). Next, in Section~\ref{s:superregularity and subsmoothness} 
we introduce the notion of Clarke super-regularity (Definition~\ref{Def-Clarke-super}) and relate it to the 
notion of subsmoothness (Theorem~\ref{t:semi-subsmooth char}). We also provide an example illustrating 
that Clarke super-regularity at a point is a strictly weaker condition than Clarke regularity around the 
point (Example~\ref{example}).  Finally, in Section~\ref{s:regularity of functions}, we provide analogous 
statements for Lipschitz continuous functions, relating the class of approximately convex functions to 
super-regularity of the epigraph. After completing this work we received a preprint \cite{Thib18} which contains 
results of this flavor, including a 
characterization of (limiting) super-regularity in terms of (metric) subsmoothness.

\section{Normal cones and Clarke regularity}\label{s:normal cones and regularity}

The notation used throughout this work is standard for the field of variational
analysis, as can be found in \cite{VA}. The \emph{closed ball} of radius $r>0$
centered at $x\in\mathbb{R}^{n}$ is denoted ${\mathbb{B}}_{r}(x)$ and the
closed unit ball is denoted $\mathbb{B}:=\mathbb{B}_{1}(0)$. The 
\emph{(metric) projector} onto a set $\Omega\subset\mathbb{R}^{n}$, denoted by
$P_{\Omega}:\mathbb{R}^{n}\rightrightarrows\Omega$, is the multi-valued
mapping defined by
\[
P_{\Omega}(x):=\{\omega\in\Omega:\Vert x-\omega\Vert=d(x,\Omega)\},
\]
where $d(x,\Omega)$ denotes the distance of the point $x\in\mathbb{R}^n$ to the set $\Omega$. 
When $\Omega$ is nonempty and closed, its projector $P_{\Omega}$ is everywhere
nonempty.  A selection from the projector is called a {\em projection}.\smallskip

Given a set $\Omega$, we denote its \emph{closure} by $\operatorname{cl}
\Omega$, its \emph{convex hull} by $\operatorname{conv}\Omega$, and its
\emph{conic hull} by $\operatorname{cone}\Omega$. In this work we shall deal with 
two fundamental tools in nonsmooth analysis; \emph{normal cones} to sets
and \emph{subdifferentials} of functions (Section~\ref{s:regularity of functions}).

\begin{definition}
[normal cones]Let $\Omega\subseteq\mathbb{R}^{n}$ and let $\bar{\omega}
\in\Omega$.

\begin{enumerate}
[(i)]

\item The \emph{proximal normal cone} of $\Omega$ at $\bar{\omega}\in\Omega$
is defined by
\[
N_{\Omega}^{\mathrm{P}}(\bar{\omega})=\operatorname{cone}\left(  P_{\Omega
}^{-1}\bar{\omega}-\bar{\omega}\right)  .
\]
Equivalently, $\bar{\omega}^{\ast}\in N_{\Omega}^{\mathrm{P}}(\bar{\omega})$
whenever there exists $\sigma\geq0$ such that
\[
\langle\bar{\omega}^{\ast},\omega-\bar{\omega}\rangle\leq\sigma\Vert
\omega-\bar{\omega}\Vert^{2},\quad\forall\omega\in\Omega.
\]

\item The \emph{Fr\'echet normal cone} of $\Omega$ at $\bar{\omega}$ is defined
by
\[
\hat{N}_{\Omega}(\bar{\omega})=\left\{  \bar{\omega}^{\ast}\in\mathbb{R}
^{n}:\,\langle\bar{\omega}^{\ast},\omega-\bar{\omega}\rangle\leq
\mathrm{o}(\Vert\omega-\bar{\omega}\Vert),\,\forall\omega\in\Omega\right\},
\]
Equivalently, $\bar{\omega}^{\ast}\in\hat{N}_{\Omega}(\bar{\omega}),$ if for
every $\varepsilon>0$ there exists $\delta>0$ such that
\begin{equation}
\langle\bar{\omega}^{\ast},\omega-\bar{\omega}\rangle\leq\varepsilon
\Vert\omega-\bar{\omega}\Vert,\quad\text{for all }\omega\in\Omega
\cap{\mathbb{B}}_{\delta}(\bar{\omega}). \label{d0}
\end{equation}

\item The \emph{limiting normal cone} of $\Omega$ at $\bar{\omega}$ is defined
by
\[
{N}_{\Omega}(\bar{\omega})=\Limsup_{\omega\rightarrow\bar{\omega}}\hat
{N}_{\Omega}(\bar{\omega}),
\]
where the limit superior denotes the \emph{Painlev\'e--Kuratowski outer limit}.

\item The \emph{Clarke normal cone} of $\Omega$ at $\bar{\omega}$ is defined
by
\[
N_{\Omega}^{\text{\textrm{C}}}(\bar{\omega})=\operatorname{cl}
\operatorname{conv}{N}_{\Omega}(\bar{\omega}).
\]

\end{enumerate}

When $\bar{\omega}\not \in \Omega$, all of the aforementioned normal cones at $\bar{\omega}$ are
defined to be empty.
\end{definition}

Central to our subsequent analysis is the notion of a \emph{truncation} of a normal cone. 
Given $r>0,$ one defines the \emph{$r$-truncated} version of each of the above cones to be its 
intersection with a ball centered at the origin of radius $r$. For
instance, the \emph{$r$-truncated proximal normal cone} of $\Omega$ at
$\bar{\omega}\in\Omega$ is defined by
\[
N_{\Omega}^{\text{\textrm{rP}}}(\bar{\omega})=\operatorname{cone}\left(
P_{\Omega}^{-1}\bar{\omega}-\bar{\omega}\right)  \cap{\mathbb{B}}_{r},
\]
that is, $\bar{\omega}^{\ast}\in N_{\Omega}^{\text{\textrm{rP}}}(\bar{\omega
})$ whenever $\Vert\bar{\omega}^{\ast}\Vert\leq r$ and for some $\sigma\geq0$
we have
\[
\langle\bar{\omega}^{\ast},\omega-\bar{\omega}\rangle\leq\sigma\Vert
\omega-\bar{\omega}\Vert^{2},\quad\forall\omega\in\Omega.
\]

In general, the following inclusions between the normal cones can deduce straightforwardly from
their respective definitions:
\begin{equation}
N_{\Omega}^{\text{\textrm{P}}}(\bar{\omega})\subseteq{\hat{N}}_{\Omega}
(\bar{\omega})\subseteq{N}_{\Omega}(\bar{\omega})\subseteq N_{\Omega
}^{\text{\textrm{C}}}(\bar{\omega}). \label{eq:normal cone inclusions}
\end{equation}

The regularity of sets is characterized by the relation between elements in the 
graph of the normal cones to the sets and directions constructable from 
points in the sets.  The weakest kind of regularity of sets 
that has been shown to guarantee convergence of the alternating projections 
algorithm is {\em elemental subregularity} (see \cite[Cor.3.13(a)]{HesLuk13}
and \cite[Theorem 3.3.5]{ThaoDiss}).  It was called {\em elemental} (sub)regularity 
in \cite[Definition~5]{KruLukNgu16} and  \cite[Definition~3.1]{LukNguTam17} to 
distinguish regularity of sets from regularity of collections of sets.  Since we are only 
considering the regularity of sets, and later functions, we can drop the ``elemental'' 
qualifier in the present setting.  We also streamline the terminology and variations on
elemental subregularity used in \cite{KruLukNgu16, LukNguTam17}, replacing
{\em uniform} elemental subregularity  with a more versatile and easily distinguishable 
variant.  
\begin{definition}[{subregularity {\cite[Definition~5]{KruLukNgu16}}}]\label{Def-elem-subreg}Let
$\Omega\subseteq\mathbb{R}^{n}$ and $\bar{\omega}\in\Omega$. The set $\Omega$
is said to be \emph{$\epsilon$-subregular relative to $\Lambda$ at $\bar{\omega}$ 
for $(\hat\omega, \hat\omega^*)\in\gph \ncone{\Omega}$}
if it is locally closed at
$\bar{\omega}$ and there exists an $\epsilon>0$ together with a neighborhood $U$ of $\bar\omega$ 
such that 
\begin{equation}
\left\langle \hat\omega^{\ast}-(\omega'-\omega),
\omega-\hat\omega\right\rangle
\leq\varepsilon||\hat\omega^{\ast}-(\omega'-\omega)||\Vert \omega-\hat\omega\Vert\,,\quad\forall \omega'\in\Lambda\cap U,~ \forall \omega\in P_{\Omega}(\omega'). \label{e:esr}
\end{equation}
If {\em for every} $\epsilon>0$ there is a neighborhood (depending on $\epsilon$) such that \eqref{e:esr} holds, then 
$\Omega$ is said to be \emph{subregular relative to $\Lambda$ at $\bar{\omega}$ 
for $(\hat\omega, \hat\omega^*)\in\gph \ncone{\Omega}$}.
\end{definition}
The property that distinguishes the degree of regularity of sets is the diversity of vectors 
$(\hat\omega, \hat\omega^*)\in\gph \ncone{\Omega}$ for which \eqref{e:esr} holds, as well as the 
choice of the set $\Lambda$.  
Of particular interest to us are {\em Clarke regular} sets, which satisfy \eqref{e:esr} for all 
$\epsilon>0$ and for all Clarke normal vectors at $\bar\omega$.    
\begin{definition}
[{Clarke regularity}]\label{Def-Clarke-reg}The set $\Omega$
is said to be \emph{Clarke regular} at $\bar{\omega}\in\Omega$ if it is locally closed at
$\bar{\omega}$ and for every  $\varepsilon>0$ there exists $\delta>0$
such that for all $(\bar\omega, \bar{\omega}^*)\in\gph\cncone{\Omega}$
\begin{equation}
\left\langle \bar{\omega}^{\ast},
\omega-\bar{\omega}\right\rangle
\leq\varepsilon\,||\bar{\omega}^{\ast}||\Vert\omega-\bar{\omega}\Vert, \quad\forall \omega\in\Omega\cap{\mathbb{B}}_{\delta}(\bar{\omega}). \label{d1}
\end{equation}
\end{definition}
Note that \eqref{d1} is \eqref{e:esr} with $\Lambda=\Omega$ and $U=\mathbb{B}_\delta(\bar\omega)$, which in 
the case of Clarke regularity holds for all $(\bar\omega, \bar{\omega}^*)\in\gph\cncone{\Omega}$.  
A short argument shows that, for $\Omega$ Clarke regular at $\bar\omega$,  the Clarke and Fr\'echet normal 
cones coincide at $\bar\omega$.  Indeed, this property is used to {\em define} Clarke regularity in 
{\cite[Definition~6.4]{VA}}.  It is also immediately clear from the definitions that if $\Omega$ is  
Clarke regular at $\bar\omega$, then it is subregular relative to $\Lambda=\Omega$ 
at $\bar\omega$ for all $\bar\omega^*\in\ncone{\Omega}(\bar\omega)$.  

By setting $\Lambda=\mathbb{R}^n$, letting $\hat\omega \in \Omega$ be in a neighborhood of $\bar \omega$ and 
fixing $\hat{\omega}^*=0$ in the context of Definition~\ref{Def-elem-subreg}, we arrive at super-regularity which, 
when stated explicitly, takes the following form.
\begin{definition}[{super-regularity {\cite[Definition~4.3]{LewisLukeMalick09}}}]\label{Def-super-reg}
Let $\Omega\subseteq\mathbb{R}^{n}$ and $\bar{\omega}\in\Omega$. The set $\Omega$
is said to be \emph{super-regular at $\bar{\omega}$}
if it is locally closed at
$\bar{\omega}$ and for every $\epsilon>0$ there is a $\delta>0$   
such that for all $(\hat\omega, 0)\in\gph\ncone{\Omega}\cap \left\{\left(\mathbb{B}_\delta(\bar\omega), 0\right)\right\}$
\begin{equation}
\left\langle \omega'-\omega,
\hat\omega-\omega\right\rangle
\leq\varepsilon\,||\omega'-\omega||\Vert \hat\omega-\omega\Vert,\quad\forall \omega'\in  \mathbb{B}_\delta(\bar\omega),~ \forall \omega\in P_{\Omega}(\omega'). \label{e:sup-reg}
\end{equation}
\end{definition}
\noindent Rewriting the above leads the the following equivalent characterization of super-regularity, which is more 
useful for our purposes.
\begin{proposition}[\hspace{-0.05ex}{\cite[Proposition~4.4]{LewisLukeMalick09}}]\label{t:super-reg}The set $\Omega\subseteq\mathbb{R}^n$
is \emph{super-regular} at $\bar{\omega}\in\Omega$ if and only if it is locally closed at
$\bar{\omega}$ and for every  $\varepsilon>0$ there exists $\delta>0$
such that
\begin{eqnarray}
&&\left\langle \omega_1^{\ast},
\omega_2-\omega_1\right\rangle
\leq\varepsilon\,||\omega_1^{\ast}||\Vert\omega_2-\omega_1\Vert, \nonumber\\
&&\qquad\qquad\qquad\qquad\forall (\omega_1,\omega_1^{\ast})\in 
\gph\ncone{\Omega}\cap\left(\mathbb{B}_{\delta}(\bar{\omega})\times \mathbb{R}^n\right),\quad 
\forall \omega_2\in\Omega\cap\mathbb{B}_{\delta}(\bar{\omega}). \label{e:sup-reg2}
\end{eqnarray}
\end{proposition}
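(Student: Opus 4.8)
The plan is to prove the two implications separately, reading \eqref{e:sup-reg2} as the ``limiting-normal-cone form'' of super-regularity and \eqref{e:sup-reg} as the ``projection form'' (local closedness at $\bar\omega$ appears on both sides and plays no role in the equivalence). First I would dispatch the implication \eqref{e:sup-reg2} $\Rightarrow$ \eqref{e:sup-reg}, which is essentially immediate. The observation is that whenever $\omega\in P_{\Omega}(\omega')$ one has $\omega'-\omega\in\pncone{\Omega}(\omega)\subseteq\ncone{\Omega}(\omega)$ by definition of the proximal normal cone together with the inclusions \eqref{eq:normal cone inclusions}. Moreover a projection never moves a point further than the reference point, so $\|\omega-\bar\omega\|\leq\|\omega-\omega'\|+\|\omega'-\bar\omega\|\leq2\|\omega'-\bar\omega\|$, since $\bar\omega\in\Omega$ forces $\|\omega-\omega'\|=\dist(\omega',\Omega)\leq\|\omega'-\bar\omega\|$. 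Hence, given $\epsilon>0$, if I invoke \eqref{e:sup-reg2} to obtain a radius $\delta_0$ and set $\delta:=\delta_0/2$, every admissible configuration in \eqref{e:sup-reg} yields $(\omega_1,\omega_1^{\ast}):=(\omega,\omega'-\omega)\in\gph\ncone{\Omega}\cap(\mathbb{B}_{\delta_0}(\bar\omega)\times\mathbb{R}^n)$ and $\omega_2:=\hat\omega\in\Omega\cap\mathbb{B}_{\delta_0}(\bar\omega)$, so \eqref{e:sup-reg2} reproduces \eqref{e:sup-reg} verbatim.

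The substance is in the converse, \eqref{e:sup-reg} $\Rightarrow$ \eqref{e:sup-reg2}, where the difficulty is that \eqref{e:sup-reg} only supplies information along genuine projection directions based near $\bar\omega$, whereas \eqref{e:sup-reg2} must control \emph{arbitrary limiting} normals. I would proceed in two stages. In the first stage I establish \eqref{e:sup-reg2} for proximal normals $\omega_1^{\ast}\in\pncone{\Omega}(\omega_1)$. Writing $\omega_1^{\ast}=\|\omega_1^{\ast}\|v$ with $v$ a unit vector (the case $\omega_1^{\ast}=0$ being trivial), the cone structure $\pncone{\Omega}(\omega_1)=\cone(P_{\Omega}^{-1}\omega_1-\omega_1)$ yields some $t_0>0$ with $\omega_1\in P_{\Omega}(\omega_1+t_0 v)$. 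The key auxiliary fact is the \emph{projection-stays} property: if $\omega_1\in P_{\Omega}(\omega_1+t_0 v)$ then $\omega_1\in P_{\Omega}(\omega_1+t v)$ for every $t\in(0,t_0]$, which follows from the proximal inequality $2\langle(\omega_1+t_0 v)-\omega_1,\,\omega-\omega_1\rangle\leq\|\omega-\omega_1\|^2$ and a short expansion of $\|(\omega_1+t v)-\omega\|^2$. This lets me slide the external point $\omega'=\omega_1+t v$ as close to $\bar\omega$ as I like while retaining $\omega_1$ as a projection; choosing $t$ small places $\omega'$ in the prescribed neighbourhood, and applying \eqref{e:sup-reg} with $\omega=\omega_1$ and $\hat\omega=\omega_2$, then cancelling the factor $t$ and rescaling by $\|\omega_1^{\ast}\|$, produces exactly the inequality in \eqref{e:sup-reg2}.

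In the second stage I pass from proximal to limiting normals via the standard representation $\ncone{\Omega}(\omega_1)=\Limsup_{\omega\to\omega_1}\pncone{\Omega}(\omega)$ (see \cite{VA}): any $(\omega_1,\omega_1^{\ast})\in\gph\ncone{\Omega}$ is a limit of pairs $(\omega_k,\omega_k^{\ast})$ with $\omega_k^{\ast}\in\pncone{\Omega}(\omega_k)$ and $\omega_k\to\omega_1$. Applying the first-stage inequality at each $\omega_k$ and letting $k\to\infty$ then gives \eqref{e:sup-reg2} by continuity of the inner product and the norm. The only point demanding care is radius bookkeeping: since $\omega_k\to\omega_1$ and the auxiliary external points $\omega_k+t v_k$ drift slightly, I would fix the neighbourhood in \eqref{e:sup-reg2} to be $\mathbb{B}_{\delta_0/4}(\bar\omega)$, leaving room for the $\omega_k$ and the $\omega_k+t v_k$ to remain inside the radius $\delta_0$ furnished by \eqref{e:sup-reg}. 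I expect this bookkeeping, together with verifying the projection-stays property, to be the only genuinely delicate part; the remainder is a direct transcription.
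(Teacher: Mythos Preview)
The paper does not supply its own proof of this proposition: it simply quotes the result from \cite[Proposition~4.4]{LewisLukeMalick09} and moves on. So there is nothing in the present paper to compare against; your task is really to reproduce the argument behind the cited reference.

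Your reconstruction is correct and follows the standard route. The direction \eqref{e:sup-reg2}$\Rightarrow$\eqref{e:sup-reg} is exactly as you say: $\omega'-\omega\in\pncone{\Omega}(\omega)\subseteq\ncone{\Omega}(\omega)$ together with the estimate $\|\omega-\bar\omega\|\leq 2\|\omega'-\bar\omega\|$ handles the radius halving. For the converse, your two-stage argument (first proximal normals via the ``projection-stays'' fact, then limiting normals by the representation $\ncone{\Omega}(\cdot)=\Limsup\pncone{\Omega}(\cdot)$, valid in $\mathbb{R}^n$) is precisely the mechanism used in \cite{LewisLukeMalick09}. The projection-stays step you sketch is indeed the only nontrivial point: from $\omega_1\in P_{\Omega}(\omega_1+t_0v)$ one gets $\langle v,\omega-\omega_1\rangle\leq\|\omega-\omega_1\|^2/(2t_0)$ for all $\omega\in\Omega$, and expanding $\|\omega_1+tv-\omega\|^2$ yields the claim for $0<t\leq t_0$. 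Your bookkeeping with $\delta_0/4$ is more than adequate; in fact $\delta_0/2$ already suffices, since the auxiliary point $\omega_k+tv_k$ can be brought arbitrarily close to $\omega_k$ by shrinking $t$.
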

\noindent It is immediately clear from this characterization that super-regularity implies Clarke regularity. 
By continuing our development of increasingly nicer regularity properties to convexity, we have the following 
relationships involving stronger notions of regularity.  
\begin{proposition}\label{t:relationships}
 Let $\Omega\subseteq\mathbb{R}^n$ be locally closed at $\bar\omega\in\Omega$. 
 \begin{enumerate}[(i)]
  \item\label{t:relationships i} If $\Omega$ is prox-regular at $\bar\omega$ (\emph{i.e.,} there exists a neighborhood of $\xbar$ on which the projector is single-valued), then there is a constant $\gamma>0$ such that for all $\epsilon>0$ 
 \begin{eqnarray}
&&\left\langle \omega_1^{\ast},\omega_2-\omega_1\right\rangle
\leq\varepsilon||\omega_1^{\ast}||\Vert \omega_2-\omega_1\Vert\,,
\nonumber\\
&&\qquad\qquad\qquad
\forall (\omega_1, \omega_1^*) \in \gph\ncone{\Omega}\cap\left(\mathbb{B}_{\gamma\epsilon}(\bar\omega)\times\mathbb{R}^n\right), 
~ \forall \omega_2\in\Omega\cap \mathbb{B}_{\gamma\epsilon}(\bar\omega). \label{e:pr}
\end{eqnarray}
\item\label{t:relationships ii} If $\Omega$ is convex, then
 \begin{equation}
\left\langle \omega_1^{\ast},\omega_2-\omega_1\right\rangle
\leq 0\,,\qquad  \forall(\omega_1, \omega_1^*) \in \gph\ncone{\Omega},
\quad \forall \omega_2\in\Omega. \label{e:cvx}
\end{equation}
 \end{enumerate}
\end{proposition}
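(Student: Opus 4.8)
The plan is to handle the two parts by different routes: part (ii) is a soft consequence of convexity, while part (i) is the quantitative heart of the statement and rests on the metric description of prox-regularity.

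For part (ii) I would use the proximal normal cone together with the variational characterization of the convex projection. If $\omega_1^\ast\in\pncone{\Omega}(\omega_1)$ is nonzero, then by definition $\omega_1\in P_\Omega(\omega_1+s\omega_1^\ast)$ for some $s>0$; since $\Omega$ is convex, the projection is characterized by $\langle(\omega_1+s\omega_1^\ast)-\omega_1,\,\omega_2-\omega_1\rangle\le0$ for all $\omega_2\in\Omega$, which is exactly $s\langle\omega_1^\ast,\omega_2-\omega_1\rangle\le0$ and yields \eqref{e:cvx} for proximal normals. For a general limiting normal $\omega_1^\ast\in\ncone{\Omega}(\omega_1)$ I would write $\omega_1^\ast=\lim_k\omega_{1,k}^\ast$ with $\omega_{1,k}^\ast\in\hat N_{\Omega}(\omega_{1,k})$ and $\omega_{1,k}\to\omega_1$; as the Fr\'echet normal cone of a convex set coincides with the convex normal cone, each $\omega_{1,k}^\ast$ satisfies the inequality against every $\omega_2\in\Omega$, and passing to the limit gives \eqref{e:cvx} on all of $\gph\ncone{\Omega}$.

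For part (i) the engine is the Poliquin--Rockafellar--Thibault characterization of prox-regularity: single-valuedness of $P_\Omega$ near $\bar\omega$ is equivalent to the existence of $\epsilon_0>0$ and $\rho>0$ with
\begin{equation*}
\langle\omega_1^\ast,\omega_2-\omega_1\rangle\le\tfrac{\rho}{2}\|\omega_2-\omega_1\|^2,\quad\omega_1,\omega_2\in\Omega\cap\mathbb{B}_{\epsilon_0}(\bar\omega),\ \omega_1^\ast\in\ncone{\Omega}(\omega_1),\ \|\omega_1^\ast\|\le\epsilon_0 .
\end{equation*}
Since $\ncone{\Omega}(\omega_1)$ is a cone, rescaling a nonzero $\omega_1^\ast$ to norm $\epsilon_0$ and clearing the resulting factor upgrades this to the positively homogeneous estimate $\langle\omega_1^\ast,\omega_2-\omega_1\rangle\le M\|\omega_1^\ast\|\,\|\omega_2-\omega_1\|^2$, valid for every $\omega_1^\ast\in\ncone{\Omega}(\omega_1)$, where $M:=\rho/(2\epsilon_0)$.

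The derivation of \eqref{e:pr} is then a scaling argument, and the one point that needs care is that \eqref{e:pr} is asserted for all $\epsilon>0$ with a single constant $\gamma$. I would split on the size of $\epsilon$. For $\epsilon\ge1$ the estimate \eqref{e:pr} holds at every pair of points by Cauchy--Schwarz, with no localization required. For $0<\epsilon<1$, set $\gamma:=\min\{\epsilon_0,\,1/(2M)\}$; then any $\omega_1,\omega_2\in\mathbb{B}_{\gamma\epsilon}(\bar\omega)$ satisfy $\gamma\epsilon<\gamma\le\epsilon_0$, so they lie in the prox-regular neighborhood, and $\|\omega_2-\omega_1\|\le2\gamma\epsilon$, whence the homogeneous estimate gives
\begin{equation*}
\langle\omega_1^\ast,\omega_2-\omega_1\rangle\le M\|\omega_1^\ast\|\,\|\omega_2-\omega_1\|\cdot\|\omega_2-\omega_1\|\le2M\gamma\epsilon\,\|\omega_1^\ast\|\,\|\omega_2-\omega_1\|\le\epsilon\,\|\omega_1^\ast\|\,\|\omega_2-\omega_1\|,
\end{equation*}
using $2M\gamma\le1$. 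The main obstacle is not any individual estimate but rather isolating the correct quantitative form of prox-regularity and checking that the radius $\gamma\epsilon$ shrinks quickly enough to keep both points inside the prox-regular neighborhood while the hypomonotonicity constant $M$ is absorbed into $\epsilon$; the split at $\epsilon=1$ is precisely what allows a fixed $\gamma$ to serve for all $\epsilon>0$.
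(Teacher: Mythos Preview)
Your argument is correct. The paper itself does not supply a proof at all: for part~(i) it merely cites \cite[Proposition~4(vi)]{KruLukNgu16}, and for part~(ii) it says ``classical'' and stops. Your treatment of~(ii) via the variational characterization of the convex projection and a limit passage through Fr\'echet normals is precisely the standard route that the word ``classical'' is gesturing at. Your treatment of~(i) reconstructs what the cited reference does---extracting the hypomonotonicity inequality from prox-regularity, rescaling by positive homogeneity of the normal cone to obtain the quadratic estimate with the $\|\omega_1^\ast\|$ factor, and then absorbing one copy of $\|\omega_2-\omega_1\|$ into $\epsilon$ on the ball of radius $\gamma\epsilon$. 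The split at $\epsilon=1$ via Cauchy--Schwarz is a clean way to secure a single $\gamma$ for all $\epsilon>0$. In short, there is no methodological divergence to discuss; you have written out in full the argument the paper outsources to a reference.
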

\begin{proof}
The proof of \eqref{t:relationships i} can be found in \cite[Proposition 4(vi)]{KruLukNgu16}.  Part \eqref{t:relationships ii}
is classical. 
\end{proof}

\begin{example}[Pac-Man]\label{eg:pacman}
Let $\xbar = 0\in\mathbb{R}^2$ and consider two subsets of $\mathbb{R}^2$ given by
 \begin{align*}
  A&=\{ (x_1,x_2)\in\mathbb{R}^2~|~ x_1^2+ x_2^2\leq 1, ~-(1/2)x_1\leq x_2\leq x_1, x_1\geq 0\}, \\
  B&=\{ (x_1,x_2)\in\mathbb{R}^2~|~ x_1^2+ x_2^2\leq 1, ~x_1\leq |x_2|\}.
 \end{align*}
The set $ B$ looks like a ``Pac-Man''' with mouth opened to the right and the set $A$, if you like,  
a piece of pizza. For an illustration, see Figure~\ref{f:pacman}.  The set $B$ is subregular relative to $A$ at $\xbar = 0$ for all $(b,v)\in \gph{\paren{\ncone{B}\cap A}}$ for $\epsilon=0$ 
on all neighborhoods since, for all 
$a\in A$, $a_B\in P_{B}(a)$ and $v\in \ncone{B}(b)\cap A$. To see this, we simply note that
\[
 \ip{v-(a-a_B)}{a_B-b} = \ip{v}{a_B-b}-\ip{a-a_B}{a_B-b} = 0.
\]
In other words, from the perspective of the piece of pizza, Pac-Man looks {\em convex}.
The set $B$, however, is only $\epsilon$-subregular at $\xbar=0$ relative to $\mathbb{R}^2$ 
for any $v\in\ncone{B}(0)$ for $\epsilon=1$ 
since, by choosing  $x=tv \in  B$ (where $0\neq v\in  B\cap \ncone{B}(0)$, $t\downarrow 0$),
we have $\ip{v}{x}=\|v\|\|x\|>0$.  Clearly, this also means that Pac-Man is not Clarke regular.  
\end{example}

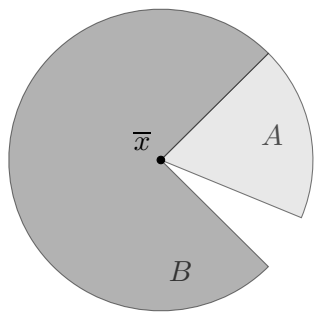
\begin{figure}[!htb]
\centering
\begin{tikzpicture}
  \draw[draw=black!80!gray,fill=gray!30,opacity=0.6] (0,0) -- (-22.5:2cm) arc (-22.5:45:2cm) -- cycle;
  \draw[draw=black!80!gray,fill=gray,opacity=0.6] (0,0) -- (45:2cm) arc (45:315:2cm) -- cycle;
  \draw[fill=black] (0,0) circle (0.3ex) node[above left] {$\xbar$};
  \draw[color=black!40!gray] (12.5:1.5cm) node {$A$};
  \draw[color=black!60!gray] (-80:1.5cm) node {$B$};
\end{tikzpicture}
\caption{An illustration of the sets in Example~\ref{eg:pacman}.\label{f:pacman}}
\end{figure}

\section{Super-regularity and subsmoothness}\label{s:superregularity and subsmoothness}

In the context of the definitions surveyed in the previous section,  
we introduce an even stronger type of regularity that we identify, in Theorem \ref{t:semi-subsmooth char}, 
with {\em subsmoothness} as studied in \cite{AusDanThi04}. This will provide a crucial link to the analogous 
characterizations of regularity for {\em functions} considered in Theorem \ref{th:approx convex iff CSR}, in 
particular, to {\em approximately convex} functions studied in \cite{NgaLucThe00}.
\begin{definition}
[Clarke super-regularity]\label{Def-Clarke-super}Let $\Omega\subseteq\mathbb{R}^{n}$ and 
$\bar{\omega}\in\Omega$. The set $\Omega$ is said to be \emph{Clarke
super-regular} at $\bar{\omega}$ if it is locally closed at $\bar{\omega}$ and
for every $\varepsilon>0$ there exists $\delta>0$ such that for every
$(\hat\omega,\hat\omega^{\ast})\in \gph\cncone{\Omega}\cap\left(\mathbb{B}_{\delta}(\bar{\omega})\times \mathbb{R}^n\right)$, the following inequality holds
\begin{equation}
\left\langle \hat\omega^{\ast},
\omega-\hat\omega\right\rangle \leq
\varepsilon\,||\hat\omega^{\ast}||\Vert\omega-\hat\omega\Vert, \quad\forall \omega\in\Omega\cap\mathbb{B}_{\delta}(\bar{\omega}).\label{e:Clarke-super}
\end{equation}
\end{definition}
The only difference between Clarke super-regularity and super-regularity is that, in the case of 
Clarke super-regularity, the key inequality above holds for all nonzero Clarke normals in a neighborhood 
instead holding only for limiting normals (compare \eqref{e:sup-reg2} with~\eqref{e:Clarke-super}). 
It therefore follows that Clarke super-regularity at a point implies Clarke regularity there.  
Nevertheless, even this stronger notion of regularity does not imply Clarke regularity around $\bar{\omega}$, 
as the following counterexample shows.

\begin{figure}[htb]
	\centering
	\begin{tikzpicture}
	\draw[<-] (-0.7,0) -- (0,0);
	\draw[densely dotted] (0,0) -- (0.4,0);
	\draw (0.4,0) -- (-5:1) -- (-12.5:2.5) -- (-25:4.5);
	\draw[densely dotted] (-25:4.5) -- ($ (-25:4.5) + (0.4,-0.1) $);
	\draw[->] ($ (-25:4.5) + (0.4,0-0.1) $) -- ($ (-25:4.5) + (1.1,-0.1) $);
	\draw[draw=none,fill=gray!50,opacity=0.5] (-0.7,1.5) -- (-0.7,0) -- (0,0) -- (0.4,0)  -- (-5:1) -- (-12.5:2.5) -- (-25:4.5) -- ($ (-25:4.5) + (0.4,-0.1) $) -- ($ (-25:4.5) + (0.4,0-0.1) $) -- ($ (-25:4.5) + (1.1,-0.1) $) -- ($ (-25:4.5) + (1.1,3.375) $) -- cycle;
	\draw[color=gray!60!black] ($ (-25:4.5) + (1.1,3.375) $)  node[below left] {$\epi f$};
	\draw[fill] (0,0) circle (0.3ex) node[below] {$\bar{\omega}$};
	\draw[fill] (-5:1) circle (0.3ex) node[below] {$\omega_{k+1}$};
	\draw[fill] (-12.5:2.5) circle (0.3ex) node[below] {$\omega_{k}$};    
	\draw[fill] (-25:4.5) circle (0.3ex) node[below] {$\omega_{k-1}$};        
	\draw[<->] (-0.8,1.6) -- (-0.8,-2.4) -- ($ (-25:4.5) + (1.2,-0.5) $);
	\draw ($ (-25:4.5) + (1.1,-0.5) $) node[below left] {$x$};
	\draw  (-0.8,1.6) node [below left] {$f(x)$};
	\end{tikzpicture}
	\caption{A sketch of the function $f$ and the sequence $(\omega_k)$ given in Example~\ref{example}.\label{f:counterexample}}
\end{figure}
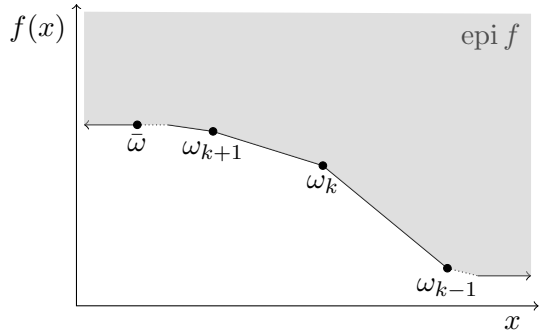

\begin{example}[regularity only at a point]\label{example}
Let $f:\mathbb{R}^{2}\rightarrow\mathbb{R}$ be the continuous, piecewise linear function (see Figure~\ref{f:counterexample}) defined by
\[
f(x):=
\begin{cases}
\phantom{arismat} 0, & \text{if \,\,} x\leq0 \smallskip \\
-\frac{1}{2^{k}}(x-\frac{1}{2^{k}})-\frac{1}{3\cdot4^{k}}\,, & \text{if \,\,} \frac{1}{2^{k+1}
}\leq x\leq\frac{1}{2^{k}}\quad\text{(for }k=1,2,\ldots\text{)} \smallskip \\
\phantom{mat}-\frac{1}{12}, & \text{if \,\,} x\geq\frac{1}{2}\,.
\end{cases}
\]
Notice that
\begin{equation}
-\frac{4}{3}x^{2}\leq f(x)\leq-\frac{1}{3}x^{2},\quad\forall 
x\in\left[0,\frac{1}{2}\right]. \label{d3}
\end{equation}
Let $\Omega=\operatorname{epi}f$ denote the epigraph of $f$. Thanks to
(\ref{d3}) it is easily seen that $\Omega$ is Clarke regular at $\bar{\omega
}=(0,0)$ in the sense of Definition \ref{Def-Clarke-reg}. However, $\Omega$ is
not Clarke regular at the sequence of points $\omega_{k}=(\frac{1}{2^{k+1}
},\frac{1}{2^{k}})$ converging to $\bar{\omega}.$ Indeed, the Fr\'echet normal cones 
$\hat{N}_{\Omega}(\omega_{k})$ are reduced to $\{0\}$ for all $k\ge 1$, while
the corresponding limiting normal cones are given by
\[
{N}_{\Omega}(\omega_{k})=\mathbb{R}_{+}\left\{  \left(-\frac{1}{2^{k}},-1\right),\left(-\frac{1}{2^{k+1}},-1\right)\right\}  ,\quad\forall k\in\mathbb{N}.\vspace{-1em}
\]
\end{example}

A missing link in the cascade of set regularity is subsmooth and semi-subsmooth sets, 
introduced and studied 
by Aussel, Daniilidis and Thibault in \cite[Definitions~3.1~\&~3.2]{AusDanThi04}.
\begin{definition}
[(Semi-)subsmooth sets]\label{d:(semi-)subsmooth} Let $\Omega\subset
\mathbb{R}^{n}$ be closed and let $\bar{\omega}\in\Omega$.
\begin{enumerate}[{\rm (i)}]
\item The set $\Omega$ is \emph{subsmooth} at $\bar{\omega}\in\Omega$ if, for every
$r>0$ and $\varepsilon>0$, there exists $\delta>0$ such that for all $\omega_{1},\omega_{2}\in\mathbb{B}_{\delta}(\bar{\omega})\cap\Omega$, all $\omega_{1}^{\ast}\in N_{\Omega}^{\text{\textrm{rC}}}(\omega
_{1})$ and all $\omega_{2}^{\ast}\in N_{\Omega}^{\text{\textrm{rC}}}(\omega
_{2})$ we have:
\begin{equation}
\langle\omega_{1}^{\ast}-\omega_{2}^{\ast},\omega_{1}-\omega_{2}\rangle
\geq-\varepsilon\Vert\omega_{1}-\omega_{2}\Vert.\label{eq:subsmooth}
\end{equation}

\item The set $\Omega$ is \emph{semi-subsmooth} at $\bar{\omega}$ if, for
every $r>0$ and $\varepsilon>0$, there exists $\delta>0$ such that for all
$\omega\in\mathbb{B}_{\delta}(\bar{\omega})\cap\Omega$, all $\omega^{\ast}\in N_{\Omega}^{\text{\textrm{rC}}}(\omega)$ and all $\bar{\omega}^{\ast}\in N_{\Omega}^{\text{\textrm{rC}}}({\bar{\omega}})$

\begin{equation}
\langle\omega^{\ast}-\bar{\omega}^{\ast},\omega-\bar{\omega}\rangle
\geq-\varepsilon\Vert\omega-\bar{\omega}\Vert.\label{eq:semi-subsmooth}
\end{equation}

\end{enumerate}
\end{definition}

\noindent It is clear from the definitions that subsmoothness at a point implies
semi-subsmoothness at the same point. The next theorem establishes the precise connection between subsmoothness and
Clarke super-regularity (Definition~\ref{Def-Clarke-super}).

\begin{theorem}
[characterization of subsmoothness]\label{t:semi-subsmooth char}
Let $\Omega\subseteq\mathbb{R}^{n}$ be closed and nonempty.

\begin{enumerate}[{\rm (i)}]
\item \label{th:superregular/subsmooth} The set $\Omega$ is subsmooth at
$\bar{\omega}\in\Omega$ if and only if $\Omega$ is Clarke super-regular at
$\bar{\omega}$.

\item \label{th:supersubreg/semismooth} The set $\Omega$ is semi-subsmooth at
$\bar{\omega}\in\Omega$ if and only if for each constant $\varepsilon>0$ there
is a $\delta>0$ such that for every 
$(\bar\omega, \bar{\omega}^{\ast})\in \gph \cncone{\Omega}$ 
\[
\left\langle \bar{\omega}^{\ast},\omega-\bar{\omega}\right\rangle
\leq\varepsilon\,||\bar{\omega}^{\ast}||\Vert\omega-\bar{\omega}\Vert,\quad \forall \omega\in \Omega\cap{\mathbb{B}}_{\delta}(\bar{\omega})
\]
and for all $(\omega,\omega^{\ast})\in\gph\cncone{\Omega}\cap\left({\mathbb{B}}_{\delta}(\bar{\omega})\times \mathbb{R}^n\right)$, 
\[
\left\langle \omega^{\ast},\bar{\omega}-\omega\right\rangle
\leq\varepsilon\,||\omega^{\ast}||\Vert \bar{\omega} - \omega\Vert.
\]
\end{enumerate}
\end{theorem}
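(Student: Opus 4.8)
The plan is to prove both equivalences by direct algebraic manipulation, exploiting three elementary features of the Clarke normal cone: that $0\in\cncone{\Omega}(\omega)$ for every $\omega\in\Omega$, so in particular $0\in N_{\Omega}^{\mathrm{rC}}(\omega)$ for every $r>0$; that $\cncone{\Omega}(\omega)$ is a cone, so its nonzero elements may be normalized and all the inequalities in sight are positively homogeneous in the normal vector; and that the truncation $N_{\Omega}^{\mathrm{rC}}(\omega)=\cncone{\Omega}(\omega)\cap\mathbb{B}_{r}$ lets one trade the magnitude bound $\|\omega^{*}\|\le r$ against the normalizing factor $\|\omega^{*}\|$ that appears in the Clarke super-regularity inequality \eqref{e:Clarke-super}. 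With these in hand no compactness or limiting arguments are required; each implication reduces to choosing the tolerance $\varepsilon$ and radius $r$ appropriately and keeping track of quantifiers.

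For part~(i), I would first prove that Clarke super-regularity implies subsmoothness. Given $r,\varepsilon>0$, I would apply \eqref{e:Clarke-super} with the smaller tolerance $\varepsilon':=\varepsilon/(2r)$ to obtain a $\delta$, and then for any $\omega_{1},\omega_{2}\in\mathbb{B}_{\delta}(\bar{\omega})\cap\Omega$ and any $\omega_{i}^{*}\in N_{\Omega}^{\mathrm{rC}}(\omega_{i})$ apply it once with $(\hat\omega,\hat\omega^{*},\omega)=(\omega_{1},\omega_{1}^{*},\omega_{2})$ and once with $(\omega_{2},\omega_{2}^{*},\omega_{1})$. Using $\|\omega_{i}^{*}\|\le r$ and adding the two resulting inequalities yields, after regrouping the inner products, exactly \eqref{eq:subsmooth} with constant $2r\varepsilon'=\varepsilon$. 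Conversely, to show subsmoothness implies Clarke super-regularity, I would invoke the subsmoothness condition with $r=1$ and the given $\varepsilon$ and then set $\omega_{2}^{*}=0$, which is legitimate since $0\in N_{\Omega}^{\mathrm{1C}}(\omega_{2})$. This decouples the two points and produces $\langle\omega_{1}^{*},\omega_{2}-\omega_{1}\rangle\le\varepsilon\|\omega_{1}-\omega_{2}\|$ for every unit $\omega_{1}^{*}\in N_{\Omega}^{\mathrm{1C}}(\omega_{1})$; positive homogeneity in $\omega_{1}^{*}$ then reinstates the factor $\|\omega_{1}^{*}\|$ and delivers \eqref{e:Clarke-super}.

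Part~(ii) follows the same template with one of the two points frozen at $\bar{\omega}$. For the direction from the stated two-inequality condition to semi-subsmoothness, I would again run both inequalities with $\varepsilon'=\varepsilon/(2r)$, bound the normal vectors by $r$, and add them to recover \eqref{eq:semi-subsmooth}. For the reverse direction I would fix $r=1$ in the semi-subsmoothness definition and specialize twice: setting the normal at $\omega$ to zero yields the first displayed inequality (controlling $\langle\bar{\omega}^{*},\omega-\bar{\omega}\rangle$), while setting $\bar{\omega}^{*}=0$ yields the second (controlling $\langle\omega^{*},\bar{\omega}-\omega\rangle$), extending in each case from unit normals to arbitrary ones by homogeneity. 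The only point demanding care, and the mild obstacle throughout, is the bookkeeping of the quantifier structure: the truncation radius $r$ in the (semi-)subsmoothness definitions must be absorbed either into the choice $\varepsilon'=\varepsilon/(2r)$ or by fixing $r=1$ and rescaling, and one must verify that a single $\delta$ serves both displayed inequalities in part~(ii) simultaneously, which it does because each is extracted from the one $\delta$ furnished by the semi-subsmoothness condition.
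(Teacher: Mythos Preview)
Your proposal is correct and follows essentially the same approach as the paper's own proof: the paper also proves the subsmooth $\Rightarrow$ Clarke super-regular direction by fixing $r=1$, setting one of the two truncated Clarke normals to $0$, and normalizing the other, and proves the converse by applying \eqref{e:Clarke-super} twice with tolerance $\varepsilon'=\varepsilon/(2r)$, bounding $\|\omega_i^*\|\le r$, and adding. The paper likewise dispatches part~(ii) as ``nearly identical'' to part~(i), just as you do.
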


\begin{proof} (i). Assume $\Omega$ is subsmooth at $\bar{\omega}\in\Omega$ and fix an
$\varepsilon>0.$ Set $r=1$ and let $\delta>0$ be given by the definition of
subsmoothness.  Then for every $\omega_{1},\omega_{2}\in\Omega\cap{\mathbb{B}%
}_{\delta}(\bar{\omega})$ and $\omega_{2}^{\ast}\in N_{\Omega}%
^{\text{\textrm{C}}}(\omega_{2})\diagdown\{0\}$, applying (\ref{eq:subsmooth})
for $\omega_{1}^{\ast}=\{0\}\in N_{\Omega}^{\text{(\textrm{r=1)C}}}(\omega
_{1})$ and $||\omega_{2}^{\ast}||^{-1}\omega_{2}^{\ast}\in N_{\Omega
}^{\text{(\textrm{r=1)C}}}(\omega_{2})$ we deduce (\ref{e:Clarke-super}).
The same argument applies in the case that $\omega_{2}^{\ast}=0$ and 
$\omega_{1}^{\ast}\neq 0$.  If both $\omega_{1}^{\ast}=\omega_{2}^{\ast}=0$, then 
the required inequality holds trivially.
\smallskip

Let us now assume that $\Omega$ is Clarke super-regular at $\bar{\omega}$ and
fix $r>0$ and $\varepsilon>0.$ Let $\delta>0$ be given by the definition of
Clarke super-regularity corresponding to $\varepsilon'=\varepsilon/2r$ and
let $\omega_{1},\omega_{2}\in\mathbb{B}_{\delta}(\bar{\omega})\cap\Omega$,
$\omega_{1}^{\ast}\in N_{\Omega}^{\text{\textrm{rC}}}(\omega_{1})$ and
$\omega_{2}^{\ast}\in N_{\Omega}^{\text{\textrm{rC}}}(\omega_{2}).$ It follows
from (\ref{e:Clarke-super}) that
\begin{eqnarray*}
\left\langle \omega_{1}^{\ast},\omega_{1}-\omega_{2}\right\rangle &\geq&-\frac{\varepsilon}{2r}||\omega
_{1}^{\ast}||\Vert\omega_{1}-\omega_{2}\Vert\geq-\frac{\varepsilon}{2}\Vert\omega_{1}-\omega_{2}\Vert\\
\quad\text{and\quad}&&\\
\left\langle-\omega_{2}^{\ast},\omega_{1}-\omega_{2}\right\rangle &\geq&
-\frac{\varepsilon}{2r}||\omega_{2}^{\ast}||\Vert\omega_{1}-\omega_{2}\Vert\geq-\frac{\varepsilon}{2}\Vert\omega_{1}-\omega_{2}\Vert.
\end{eqnarray*}
We conclude by adding the above inequalities. \medskip

Part (ii) is nearly identical and the proof is omitted. 
\end{proof}

The following corollary utilizes Theorem~\ref{t:semi-subsmooth char} to summarize the relations between 
various notions of regularity for sets, the weakest of these being the weakest known regularity assumption 
under which local convergence of alternating projections has been established 
\cite[Theorem 3.3.5]{ThaoDiss}.  
\begin{corollary}\label{cor:summary set regularies}
Let $\Omega\subseteq\mathbb{R}^n$ be closed, let $\bar{\omega}\in\Omega$ and consider the following assertions.
\begin{enumerate}[(i)]
	\item\label{it:summary set regularities ii}  $\Omega$ is prox-regular at $\bar{\omega}$.
	\item\label{it:summary set regularities iii} $\Omega$ is subsmooth at $\bar{\omega}$.
	\item\label{it:summary set regularities iv} $\Omega$ is Clarke super-regular at $\omega$.
	\item\label{it:summary set regularities v}  $\Omega$ is (limiting) super-regular at $\omega$.
	\item\label{it:summary set regularities vi}  $\Omega$ is Clarke regular at $\omega$.
	\item\label{it:summary set regularities vii}  $\Omega$ is subregular at $\omega$ relative 
	to some nonempty $\Lambda\subset\mathbb{R}^n$ for all $(\omega, \omega^*)\in V\subset\gph\pncone{\Omega}$.
\end{enumerate}
Then \eqref{it:summary set regularities ii} $\implies$ \eqref{it:summary set regularities iii} $\iff$ \eqref{it:summary set regularities iv} $\implies$ \eqref{it:summary set regularities v}$\implies$\eqref{it:summary set regularities vi}$\implies$\eqref{it:summary set regularities vii}.
\end{corollary}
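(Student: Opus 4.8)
The plan is to verify the cascade link by link, leaning on the normal-cone inclusions \eqref{eq:normal cone inclusions}, the characterization of super-regularity in Proposition~\ref{t:super-reg}, and---decisively---the equivalence already proved in Theorem~\ref{t:semi-subsmooth char}(i), which hands us the central link \eqref{it:summary set regularities iii} $\iff$ \eqref{it:summary set regularities iv} for free. Most of the remaining implications are merely a restriction of a quantifier or a substitution, so I would dispose of those first and isolate the one genuinely structural step.

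For \eqref{it:summary set regularities iv} $\implies$ \eqref{it:summary set regularities v}, I would note that Clarke super-regularity demands \eqref{e:Clarke-super} for every $(\hat\omega,\hat\omega^\ast)\in\gph\cncone{\Omega}$ near $\bar\omega$; since $\ncone{\Omega}\subseteq\cncone{\Omega}$ by \eqref{eq:normal cone inclusions}, the same $\delta$ makes the inequality hold over limiting normals, which is exactly \eqref{e:sup-reg2}. For \eqref{it:summary set regularities v} $\implies$ \eqref{it:summary set regularities vi}, I would specialize \eqref{e:sup-reg2} to $\omega_1=\bar\omega$: after the harmless rescaling absorbing $\|\omega_1^\ast\|$ into $\varepsilon$, this is precisely the Fr\'echet defining inequality \eqref{d0}, so $\ncone{\Omega}(\bar\omega)\subseteq\hat N_\Omega(\bar\omega)$; combined with \eqref{eq:normal cone inclusions} all four cones coincide at $\bar\omega$ (the Fr\'echet cone being closed and convex, $\cl\conv$ does nothing), and \eqref{e:sup-reg2} at $\omega_1=\bar\omega$ now reads as \eqref{d1}. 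For \eqref{it:summary set regularities vi} $\implies$ \eqref{it:summary set regularities vii}, I would take $\Lambda=\Omega$ and $V=\{\bar\omega\}\times\pncone{\Omega}(\bar\omega)$ (nonempty, as $(\bar\omega,0)\in V$); since every $\omega'\in\Omega$ satisfies $\omega'\in P_\Omega(\omega')$, choosing the projection $\omega=\omega'$ makes the term $\omega'-\omega$ in \eqref{e:esr} vanish and reduces it to \eqref{d1}, which Clarke regularity supplies for all proximal (hence Clarke) normals at $\bar\omega$.

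The one substantive link is \eqref{it:summary set regularities ii} $\implies$ \eqref{it:summary set regularities iii}, which---invoking the equivalence \eqref{it:summary set regularities iii} $\iff$ \eqref{it:summary set regularities iv}---I would establish in the equivalent form \eqref{it:summary set regularities ii} $\implies$ \eqref{it:summary set regularities iv}. Fix $\varepsilon>0$. Proposition~\ref{t:relationships}(i) produces a constant $\gamma>0$, independent of $\varepsilon$, for which \eqref{e:pr} holds on the balls $\mathbb{B}_{\gamma\varepsilon}(\bar\omega)$; this is already the Clarke super-regularity inequality \eqref{e:Clarke-super}, except that it quantifies only over limiting normals $(\omega_1,\omega_1^\ast)\in\gph\ncone{\Omega}$ rather than Clarke normals. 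To close the gap I would invoke the standard fact that a prox-regular set is Clarke regular throughout a neighborhood of $\bar\omega$, so that $\cncone{\Omega}(\omega)=\ncone{\Omega}(\omega)$ for every $\omega$ in some fixed ball $\mathbb{B}_\rho(\bar\omega)$; taking $\delta=\min\{\gamma\varepsilon,\rho\}$ then promotes \eqref{e:pr} to \eqref{e:Clarke-super} over all Clarke normals.

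I expect this last identification of the Clarke and limiting normal cones near $\bar\omega$ to be the main obstacle: it is the only step that is not formal bookkeeping with the defining inequalities but instead appeals to the finer structure of prox-regular sets (single-valuedness of the projector on a neighborhood, and the resulting local Clarke regularity). Everything else in the chain is a quantifier restriction via \eqref{eq:normal cone inclusions}, a specialization of \eqref{e:sup-reg2}, or the substitution $\omega=\omega'$ in \eqref{e:esr}, and should be routine.
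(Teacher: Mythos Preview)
Your argument is correct. The elaboration of the implications \eqref{it:summary set regularities iv} $\Rightarrow$ \eqref{it:summary set regularities v} $\Rightarrow$ \eqref{it:summary set regularities vi} $\Rightarrow$ \eqref{it:summary set regularities vii} simply unpacks what the paper records in one line as ``follow from the definitions'', and matches the discussion surrounding Definitions~\ref{Def-Clarke-reg}--\ref{Def-super-reg} and Proposition~\ref{t:super-reg}. The equivalence \eqref{it:summary set regularities iii} $\Leftrightarrow$ \eqref{it:summary set regularities iv} is handled identically, by invoking Theorem~\ref{t:semi-subsmooth char}\eqref{th:superregular/subsmooth}.

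The only genuine divergence is the first link. The paper dispatches \eqref{it:summary set regularities ii} $\Rightarrow$ \eqref{it:summary set regularities iii} by a direct citation of \cite[Proposition~3.4(ii)]{AusDanThi04}, which already contains the statement ``prox-regular $\Rightarrow$ subsmooth''. You instead prove the equivalent implication \eqref{it:summary set regularities ii} $\Rightarrow$ \eqref{it:summary set regularities iv} by combining Proposition~\ref{t:relationships}\eqref{t:relationships i} (which gives the super-regularity inequality over limiting normals with $\delta=\gamma\varepsilon$) with the external fact that a prox-regular set is Clarke regular on a full neighborhood, so that limiting and Clarke normal cones coincide there. Both routes rely on an outside ingredient; yours has the virtue of staying closer to the paper's own Proposition~\ref{t:relationships} and making the mechanism transparent, whereas the paper's citation is shorter and avoids having to justify the neighborhood Clarke regularity of prox-regular sets. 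Your identification of that last point as the only non-formal step is accurate.
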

\begin{proof}
\eqref{it:summary set regularities ii} $\implies$ \eqref{it:summary set regularities iii}:~This was shown in  \cite[Proposition~3.4(ii)]{AusDanThi04}.
\eqref{it:summary set regularities iii} $\iff$ \eqref{it:summary set regularities iv}:~ This is 
Theorem~\ref{t:semi-subsmooth char}\eqref{th:superregular/subsmooth}.
\eqref{it:summary set regularities iv}
$\implies$ \eqref{it:summary set regularities v}$\implies$ \eqref{it:summary set regularities vi}$\implies$ \eqref{it:summary set regularities vii}:~These implications follow from the definitions.
\end{proof}
\begin{remark}[amenablility]\label{r:amenability}
A further regularity between convexity and prox-regularity is {\em amenability} \cite[Definition 10.23]{VA}. 
This was shown in \cite[Corollary~2.12]{PolRock96a} to imply prox-regularity.  Amenability plays a larger 
role in the analysis of functions and is defined precisely in this context below. 
\end{remark}

\section{Regularity of functions}\label{s:regularity of functions}

The extension of the above notions of set regularity to analogous notions for functions typically passes through the epigraphs. Given a function
$f:\mathbb{R}^{n}\rightarrow\lbrack-\infty,+\infty]$, recall that its \emph{domain} is
$\operatorname{dom}f:=\{x\in\mathbb{R}^{n}:f(x)<+\infty\}$ and its
\emph{epigraph} is 
\[
\operatorname{epi}f:=\{(x,\alpha)\in\mathbb{R}^{n}\times\mathbb{R}
:f(x)\leq\alpha\}.
\]
The {\em subdifferential} of a function at a point $\bar{x}$ can be defined in terms of the normal cone to 
its epigraph at that point.
Let $f:\mathbb{R}^{n}\to(-\infty,+\infty]$
and let $\bar{x}\in\operatorname{dom} f$.
The \emph{proximal subdifferential} of $f$ at $\bar{x}$ is defined by
\begin{equation}
\partial^{\mathrm{P}}\!f(\bar{x})=\{v\in\mathbb{R}^{n}:(v,-1)\in
N_{\operatorname{epi}f}^{\text{\textrm{P}}}((\bar{x},f(\bar{x}))\}.
\label{eq:prox subdifferential}
\end{equation}
The \emph{Fr\'echet (resp. limiting, Clarke) subdifferential}, denoted
${\hat{\partial}}f(\bar{x})$ (resp.\ $\partial f(\bar{x})$, $\partial
^{\mathrm{C}}f(\bar{x})$), is defined analogously by replacing normal cone
$N_{\operatorname{epi}f}^{\text{\textrm{P}}}(\bar{\omega})$ with ${\hat{N}
}_{\operatorname{epi}f}(\bar{\omega})$ (resp. ${N}_{\operatorname{epi}f}
(\bar{\omega}),$\ $N_{\operatorname{epi}f}^{\text{\textrm{C}}}(\bar{\omega})$)
in \eqref{eq:prox subdifferential} where $\bar{\omega}=(\bar{x},f(\bar{x}))$.
The \emph{horizon and Clarke horizon subdifferentials} at ${\overline
{x}}$ are defined, respectively, by
\begin{align*}
{\partial}_{\infty}f(\bar{x})  &  =\{v\in\mathbb{R}^{n}:(v,0)\in
{N}_{\operatorname{epi}f}((\bar{x},f(\bar{x}))\},\\
\partial^{\mathrm{C}}\!_{\infty}f(\bar{x})  &  =\{v\in\mathbb{R}^{n}:(v,0)\in
N_{\operatorname{epi}f}^{\text{\textrm{C}}}((\bar{x},f(\bar{x}))\}.
\end{align*}

In what follows, we define regularity of functions in terms of the regularity of their epigraphs. We refer to a regularity defined in such a way as {\em epi-regularity}.  
\begin{definition}[epi-regular functions]\label{d:subregular fns}
	Let $f:\mathbb{R}^n\to(-\infty,+\infty]$, $\bar{x}\in\dom f$,  $\Lambda\subseteq\dom f$, and 
        $(\ybar, \vbar)\in \gph \lsd f \cup \gph \lsd_{\infty} f$. 
	\begin{enumerate}[(i)]
	 \item $f$ is said to be \emph{$\varepsilon$-epi-subregular} at $\bar{x}\in\dom f$ 
	 relative to $\Lambda\subseteq\dom f$ 
	 for $(\ybar, \vbar)$ whenever $\epi f$ is \emph{$\varepsilon$-subregular} at $\bar{x}\in\dom f$ 
	 relative to $\{(x,\alpha)\in\epi f~|~x\in \Lambda\}$ 
	 for $(\ybar, (\vbar,e))$ with fixed $e\in\{-1,0\}$.
	\item $f$ is said to be \emph{epi-subregular} at $\bar{x}$ relative to $\Lambda\subseteq\dom f$ 
	for $(\ybar, \vbar)$ whenever $\epi f$ is {\em subregular} at $(\bar{x}, f(\xbar))$ relative to 
	$\{(x,\alpha)\in\epi f~|~x\in \Lambda\}$ 
	for $(\ybar, (\vbar, e))$ with fixed $e\in\{-1,0\}$.
	\item $f$ is said to be  {\em epi-Clarke regular} at $\bar{x}$ 
	whenever $\epi f$ is Clarke regular at $(\bar{x}, f(\xbar))$.  Similarly, 
        the function is said to be {\em epi-Clarke super-regular} (resp.\ epi-super-regular,  epi-prox-regular) 
        at $\bar{x}$ whenever its epigraph is 
        Clarke super-regular (resp.\ super-regular, or prox-regular)  at $(\bar{x}, f(\xbar))$.
	\end{enumerate}
\end{definition}

Recent work \cite{BauBolTeb16, BSTV2018} makes use of the {\em directional} 
regularity (in particular Lipschitz regularity) of functions or their gradients.  The next example illustrates how this fits
naturally into our framework. 
\begin{example}\label{eg:neg abs}
        The negative absolute value function $f(x)=-|x|$ is the classroom example of a function that is 
	not Clarke regular at $x=0$.  It is, however, $\epsilon$-epi-subregular relative to $\mathbb{R}$ at $x=0$ for all limiting 
	subdifferentials there for the same reason that the Pac-Man of Example \ref{eg:pacman} is 
	$\epsilon$-subregular relative to $\mathbb{R}^2$ at the origin for $\epsilon=1$.  
	Indeed,  $\partial f(0)=\{-1, +1\}$ and at
	any point $(x,y)$ below $\epi f$ the vector  
	$(x,y)-P_{\epi f}(x,y) \in \{\alpha(-1, -1), \alpha(1, -1)\}$ with $\alpha\geq 0$.  
        So by the Cauchy-Schwarz inequality
	\begin{eqnarray}
	&&\left\langle (\pm1, -1)-\alpha(\pm1, -1),
	P_{\epi f}(x,y)\right\rangle \nonumber\\
	&&\quad \leq||(\pm1, -1)-\alpha(\pm1, -1)||\Vert P_{\epi f}(x,y)\Vert\,,
	\quad\forall (x,y)\in\mathbb{R}^2.
	\end{eqnarray}
	In particular, any point $(x,x)\in\gph f$ we have 
        \[ P_{\epi f}(x,x)=(x,x)\quad \mbox{ and }\quad 
	(x,x)-P_{\epi f}(x,x) =(0,0),\] 
        so the inequality is tight for the subgradient $-1\in\partial f(0)$. 
	Following \eqref{e:esr},  this shows that $\epi f$ is $\epsilon$-subregular at the origin relative to 
	$\mathbb{R}^2$ for all limiting normals (in fact, for all Clarke normals) at $(0,0)$ for $\epsilon=1$.    
	In contrast, the function $f$ is not epi-subregular at $x=0$
	relative to $\mathbb{R}$ since the inequality above is tight on all balls around the origin, just as with  
	the Pac-Man of Example \ref{eg:pacman}.  If one 
	employs the restriction $\Lambda=\{x~|~x<0\}$ then epi-subregularity of $f$ is recovered at the origin
	relative to the negative orthant  for the subgradient $v=1$ for $\epsilon=0$ on the 
	neighborhood $U=\mathbb{R}$, that is,  $-|x|$ looks {\em convex} from this direction!  
\end{example}

In a subsequent section, we develop an equivalent, though more elementary, characterizations of these  
regularities of functions defined in Definition~\ref{d:subregular fns}.

\subsection{Lipschitz continuous functions}\label{s:lip}
In this section, we consider the class of locally Lipschitz functions, which allows us to avoid the horizon 
subdifferential (since this is always $\{0\}$ for Lipschitz functions). Recall that a set $\Omega$ is 
called \emph{epi-Lipschitz} at $\bar{\omega}\in\Omega$ if it 
can be represented near $\bar{\omega}$ as the epigraph of a Lipschitz continuous function. 
Such a function is called a \emph{locally Lipschitz representation} of $\Omega$ at $\bar{\omega}$.

The following notion of \emph{approximately convex} functions were introduced by 
Ngai, Luc and Thera \cite{NgaLucThe00} and turns out to fit naturally within our framework. 

\begin{definition}[approximate convexity]
A function $f:\mathbb{R}^{n}\rightarrow(-\infty
,+\infty]$ is said to be \emph{approximately convex} at $\bar{x}\in
\mathbb{R}^{n}$ if for every $\varepsilon>0$ there exists $\delta>0$ such
that
\[
\begin{gathered} (\forall x,y \in\mathbb{B}_\delta(\bar{x}))(\forall t\in\,]0,1[\,):\\ f(tx+(1-t)y)\leq tf(x)+(1-t)f(y)+\varepsilon t(1-t)\|x-y\|. \end{gathered}
\]

\end{definition}

Daniilidis and Georgiev \cite{DanGeo} and subsequently Daniilidis and Thibault \cite[Theorem~4.14]{AusDanThi04} showed the connection between 
approximately convex functions and subsmooth sets. Using our results in the previous section, we are able to provide the following extension of their characterization. In what follows, set $\omega=(x,t)\in \mathbb{R}^n\times\mathbb{R}$ and denote by $\pi({\omega})=x$ its projection onto $\mathbb{R}^n$. 

\begin{proposition}
[subsmoothness of Lipschitz epigraphs]\label{prop:subsmooth epi} Let $\Omega$ be an
epi-Lipschitz subset of $\mathbb{R}^n$ and let ${\bar{\omega}}\in\mathrm{bdry}
\Omega$. Then the following statements are equivalent: 

\begin{enumerate} [(i)] 

\item\label{it:subsmooth epi i}  $\Omega$ is Clarke super regular at ${\bar{\omega}}$. 

\item\label{it:subsmooth epi ii} $\Omega$ is subsmooth at ${\bar{\omega}}$. 

\item\label{it:subsmooth epi iii} every locally Lipschitz representation $f$ of $\Omega$ at ${\bar{\omega
}}$ is approximately convex at $\pi({\bar{\omega}})$. 

\item\label{it:subsmooth epi iv} some locally Lipschitz representation $f$ of $\Omega$ at ${\bar{\omega}}$ 
is approximately convex at $\pi({\bar{\omega}})$.
\end{enumerate}
\end{proposition}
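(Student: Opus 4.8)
The plan is to establish the equivalences by exploiting the chain structure, proving the nontrivial equivalences through a cycle while relegating the trivial ones to earlier results. The implication \eqref{it:subsmooth epi i} $\iff$ \eqref{it:subsmooth epi ii} is already available: it is precisely Theorem~\ref{t:semi-subsmooth char}\eqref{th:superregular/subsmooth} applied to the closed set $\Omega$. Likewise \eqref{it:subsmooth epi iii} $\implies$ \eqref{it:subsmooth epi iv} is immediate, since a locally Lipschitz representation of an epi-Lipschitz set exists by definition. Hence the real work is to close the loop between the set-regularity conditions and the functional condition of approximate convexity, and the cleanest route is to prove \eqref{it:subsmooth epi ii} $\implies$ \eqref{it:subsmooth epi iii} and \eqref{it:subsmooth epi iv} $\implies$ \eqref{it:subsmooth epi ii}.

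For \eqref{it:subsmooth epi ii} $\implies$ \eqref{it:subsmooth epi iii}, I would fix an arbitrary locally Lipschitz representation $f$ of $\Omega$ near $\bar\omega$, with Lipschitz constant $L$, and work on the graph of $f$. The key is to translate the subsmoothness inequality \eqref{eq:subsmooth}, which is phrased in terms of truncated Clarke normals $\omega_i^\ast \in N^{\mathrm{rC}}_\Omega(\omega_i)$, into a statement about Clarke subgradients of $f$. Since $f$ is Lipschitz, every Clarke normal to $\epi f$ at a boundary point $(x,f(x))$ has the form $\lambda(v,-1)$ with $v\in\partial^{\mathrm C} f(x)$ and $\lambda\ge 0$, and the truncation radius $r$ can be chosen to control $\|v\|$ via $L$. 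Feeding pairs of such normalized Clarke normals into \eqref{eq:subsmooth} yields a monotonicity-type estimate of the form $\langle v_1-v_2, x_1-x_2\rangle \ge -\varepsilon'\|x_1-x_2\|$ for Clarke subgradients $v_i\in\partial^{\mathrm C}f(x_i)$. The final step is then to invoke the known equivalence between this approximate monotonicity of the Clarke subdifferential and approximate convexity of $f$ --- this is exactly the Ngai--Luc--Thera characterization and the Daniilidis--Georgiev result cited before the proposition, so I may cite \cite{NgaLucThe00, DanGeo, AusDanThi04} rather than reprove it.

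For \eqref{it:subsmooth epi iv} $\implies$ \eqref{it:subsmooth epi ii}, I would reverse the argument: starting from approximate convexity of one representation $f$ at $\pi(\bar\omega)$, the cited subdifferential characterization gives approximate monotonicity of $\partial^{\mathrm C} f$ on a neighborhood, and I would lift this back to the inequality \eqref{eq:subsmooth} for $\Omega=\epi f$ near $\bar\omega$. The lifting uses again that every truncated Clarke normal to the epigraph decomposes as $\lambda(v,-1)$ and that the components of $\omega_1-\omega_2=(x_1-x_2, f(x_1)-f(x_2))$ are controlled by the Lipschitz constant, so the vertical contributions are absorbed into the constant multiplying $\|x_1-x_2\|$ (and hence $\|\omega_1-\omega_2\|$) after adjusting $\varepsilon$. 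Because the set is epi-Lipschitz precisely near $\bar\omega$, all Clarke normals are genuinely of epigraphical type with nonvanishing last component up to scaling, which is what makes the correspondence clean.

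The main obstacle I expect is the careful bookkeeping in the normal-cone/subgradient dictionary: one must verify that the truncation in $N^{\mathrm{rC}}_\Omega$ corresponds correctly to a bound on the subgradients $\|v\|$, handle the scaling factor $\lambda$ and the normalization used in Theorem~\ref{t:semi-subsmooth char}, and confirm that the horizontal-versus-vertical splitting of $\omega_1-\omega_2$ does not degrade the estimate beyond an adjustable $\varepsilon$. The geometry is standard for Lipschitz epigraphs, but getting the constants to line up --- in particular ensuring the $\|x_1-x_2\|$ on the right of \eqref{eq:subsmooth} and the $\|x-y\|$ in the definition of approximate convexity match up to the Lipschitz factor --- is where the argument must be executed with some care rather than waved through.
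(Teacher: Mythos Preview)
Your proposal is correct, but it takes a more laborious route than the paper. For \eqref{it:subsmooth epi i}$\iff$\eqref{it:subsmooth epi ii} you do exactly what the paper does: invoke Theorem~\ref{t:semi-subsmooth char}\eqref{th:superregular/subsmooth}. For the remaining equivalences \eqref{it:subsmooth epi ii}$\iff$\eqref{it:subsmooth epi iii}$\iff$\eqref{it:subsmooth epi iv}, however, the paper simply cites \cite[Theorem~4.14]{AusDanThi04} as a black box, whereas you unpack the argument: you translate the subsmoothness inequality on $\Omega=\epi f$ into submonotonicity of $\partial^{\mathrm C} f$ via the dictionary $\omega^\ast=\lambda(v,-1)$, and then invoke the Daniilidis--Georgiev equivalence between submonotonicity and approximate convexity. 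In effect you are reproving the relevant part of \cite[Theorem~4.14]{AusDanThi04} rather than quoting it. This buys you a more self-contained argument at the cost of the bookkeeping you anticipate.

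One technical caution for your \eqref{it:subsmooth epi iv}$\implies$\eqref{it:subsmooth epi ii} direction: when the two truncated normals $\omega_i^\ast=\lambda_i(v_i,-1)$ have different scalings $\lambda_1\neq\lambda_2$, the cross term $(\lambda_1-\lambda_2)(f(x_1)-f(x_2))$ does not vanish, so submonotonicity of $\partial^{\mathrm C} f$ alone is awkward to lift directly. The cleaner route is to use the equivalent subgradient-inequality characterization of approximate convexity (the one appearing later as Theorem~\ref{th:approx convex iff CSR}\eqref{it:approx convex iff CSR iii}), which gives $\langle v_i,x_1-x_2\rangle-(f(x_1)-f(x_2))\geq -\varepsilon\|x_1-x_2\|$ separately for each $i$ and combines linearly with the $\lambda_i$. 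Since you already cite \cite{DanGeo,AusDanThi04} for these equivalences, this is a free fix, but it is worth being explicit about which characterization you feed in.
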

\begin{proof}
The equivalence of \eqref{it:subsmooth epi i} and \eqref{it:subsmooth epi ii} follows from Theorem~\ref{t:semi-subsmooth char}\eqref{th:superregular/subsmooth}, and does not require $\Omega$ to be epi-Lipschitz.  The equivalence of 
\eqref{it:subsmooth epi ii}, \eqref{it:subsmooth epi iii} and \eqref{it:subsmooth epi iv} by \cite[Theorem~4.14]{AusDanThi04}.
\end{proof}
\begin{remark}
 The equivalences in Theorem  \ref{prop:subsmooth epi} actually hold in the Hilbert space setting without 
 any changes.  In fact, the equivalence of \eqref{it:subsmooth epi ii}-\eqref{it:subsmooth epi iv} remains true in
 Banach spaces \cite[Theorem~4.14]{AusDanThi04}.
\end{remark}

The following characterization extends \cite[Theorem~2]{DanGeo}.
\begin{theorem}[characterizations of aproximate convexity]\label{th:approx convex iff CSR}
	Let $f:\mathbb{R}^n\to\mathbb{R}$ be locally Lipschitz on $\mathbb{R}^n$ and  let $\bar{x}\in\mathbb{R}^n$. Then the following are equivalent.
	\begin{enumerate}[(i)]
		\item\label{it:approx convex iff CSR i}  $\epi f$ is Clarke super-regular at $\bar{x}$.
		\item\label{it:approx convex iff CSR ii} $f$ is approximately convex at $\bar{x}$.
		\item\label{it:approx convex iff CSR iii} For every $\veps>0$, there exists a $\delta>0$ such that 
		\begin{equation*}
		\begin{gathered}
		(\forall x,y\in\mathbb{B}_\delta(\bar{x}))(\forall v\in\csd \! f(x))\qquad
		f(y) - f(x) \geq \langle v,y-x\rangle-\veps\|y-x\|.
		\end{gathered}
		\end{equation*}   	 
		\item\label{it:approx convex iff CSR iv} $\partial f$ is submonotone \cite[Definition 7]{DanGeo} at $x_0$, that is, 
		for every $\epsilon$ there is a $\delta$ such that  for all $x_1, x_2\in \mathbb{B}_\delta(x_0)\cap \dom \partial f$, and 
		all $x_i^*\in\partial f(x_i)$ ($i=1,2$), one has
		\begin{equation}\label{e:submon}
		 \left\langle x_1^*-x_2^*, ~x_1-x_2 \right\rangle\geq -\epsilon\|x_1-x_2\|.
		\end{equation}
	\end{enumerate}
\end{theorem}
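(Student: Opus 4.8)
The plan is to establish all four equivalences by closing the loop $(\ref{it:approx convex iff CSR i})\iff(\ref{it:approx convex iff CSR ii})$, together with $(\ref{it:approx convex iff CSR i})\implies(\ref{it:approx convex iff CSR iii})\implies(\ref{it:approx convex iff CSR iv})\implies(\ref{it:approx convex iff CSR ii})$. The equivalence of \eqref{it:approx convex iff CSR i} and \eqref{it:approx convex iff CSR ii} I would read off directly from Proposition~\ref{prop:subsmooth epi}: since $f$ is locally Lipschitz on $\mathbb{R}^n$, the set $\epi f$ is epi-Lipschitz at $\bar\omega:=(\bar x,f(\bar x))$ and $f$ is itself a locally Lipschitz representation of $\epi f$ there with $\pi(\bar\omega)=\bar x$, so ``$\epi f$ Clarke super-regular at $\bar\omega$'' is equivalent to ``$f$ approximately convex at $\bar x$''.

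The heart of the argument is $(\ref{it:approx convex iff CSR i})\implies(\ref{it:approx convex iff CSR iii})$, which I would carry out by unwinding Definition~\ref{Def-Clarke-super} at the level of the epigraph. Fix $\veps>0$ and let $L$ be a Lipschitz constant for $f$ on a neighborhood of $\bar x$. Apply Clarke super-regularity with tolerance $\veps':=\veps/(1+L^2)$ to obtain $\delta>0$, and set $\delta':=\delta/\sqrt{1+L^2}$. For $x,y\in\mathbb{B}_{\delta'}(\bar x)$ the lifted points $\hat\omega:=(x,f(x))$ and $\omega:=(y,f(y))$ satisfy $\|\hat\omega-\bar\omega\|\le\sqrt{1+L^2}\,\|x-\bar x\|<\delta$ (and likewise for $\omega$), so both lie in $\epi f\cap\mathbb{B}_\delta(\bar\omega)$. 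For $v\in\csd f(x)$ the very definition of the Clarke subdifferential gives $(v,-1)\in\cncone{\epi f}(\hat\omega)$, whence \eqref{e:Clarke-super} reads $\langle v,y-x\rangle-(f(y)-f(x))\le\veps'\,\|(v,-1)\|\,\|(y-x,f(y)-f(x))\|$. I would then bound the right-hand side using $\|v\|\le L$ and $|f(y)-f(x)|\le L\|y-x\|$, so that $\|(v,-1)\|\,\|(y-x,f(y)-f(x))\|\le\sqrt{1+L^2}\cdot\sqrt{1+L^2}\,\|y-x\|=(1+L^2)\|y-x\|$. Substituting $\veps'=\veps/(1+L^2)$ collapses the estimate to $f(y)-f(x)\ge\langle v,y-x\rangle-\veps\|y-x\|$, which is exactly \eqref{it:approx convex iff CSR iii}.

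The remaining two implications close the loop cheaply. For $(\ref{it:approx convex iff CSR iii})\implies(\ref{it:approx convex iff CSR iv})$: given $\epsilon>0$, apply \eqref{it:approx convex iff CSR iii} with $\veps=\epsilon/2$ to obtain $\delta$; since $\partial f(x_i)\subseteq\csd f(x_i)$, the subgradient inequality holds for $(x,y,v)=(x_1,x_2,x_1^*)$ and for $(x,y,v)=(x_2,x_1,x_2^*)$ whenever $x_1,x_2\in\mathbb{B}_\delta(\bar x)$ and $x_i^*\in\partial f(x_i)$, and adding these two instances cancels the function values and yields precisely $\langle x_1^*-x_2^*,x_1-x_2\rangle\ge-\epsilon\|x_1-x_2\|$, i.e.\ submonotonicity. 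Finally, $(\ref{it:approx convex iff CSR iv})\implies(\ref{it:approx convex iff CSR ii})$ is the genuinely substantive direction of the Daniilidis--Georgiev characterization of approximate convexity via submonotonicity of the subdifferential, which I would invoke from \cite[Theorem~2]{DanGeo}. Chaining $(\ref{it:approx convex iff CSR ii})\iff(\ref{it:approx convex iff CSR i})\implies(\ref{it:approx convex iff CSR iii})\implies(\ref{it:approx convex iff CSR iv})\implies(\ref{it:approx convex iff CSR ii})$ then gives the equivalence of all four statements.

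I expect the main obstacle to be $(\ref{it:approx convex iff CSR i})\implies(\ref{it:approx convex iff CSR iii})$: not because it is conceptually deep, but because it requires translating the purely geometric Clarke super-regularity inequality, whose norms live in $\mathbb{R}^{n+1}$ and carry the slack coordinate $-1$, into a clean functional inequality. The care lies in the Lipschitz bookkeeping used to absorb the factor $\|(v,-1)\|\,\|(y-x,f(y)-f(x))\|\le(1+L^2)\|y-x\|$ and in choosing the radius $\delta'$ so that the lifted points remain inside $\mathbb{B}_\delta(\bar\omega)$; the only other deep ingredient, $(\ref{it:approx convex iff CSR iv})\implies(\ref{it:approx convex iff CSR ii})$, is deferred to \cite{DanGeo}.
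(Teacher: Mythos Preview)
Your argument is correct. The equivalence $(\ref{it:approx convex iff CSR i})\iff(\ref{it:approx convex iff CSR ii})$ via Proposition~\ref{prop:subsmooth epi} is exactly what the paper does. The difference is in how the remaining three items are linked: the paper simply invokes \cite[Theorem~2]{DanGeo} for the full block $(\ref{it:approx convex iff CSR ii})\iff(\ref{it:approx convex iff CSR iii})\iff(\ref{it:approx convex iff CSR iv})$ and stops, whereas you supply $(\ref{it:approx convex iff CSR i})\Rightarrow(\ref{it:approx convex iff CSR iii})$ and $(\ref{it:approx convex iff CSR iii})\Rightarrow(\ref{it:approx convex iff CSR iv})$ by hand and cite \cite{DanGeo} only for the substantive direction $(\ref{it:approx convex iff CSR iv})\Rightarrow(\ref{it:approx convex iff CSR ii})$. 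Your direct computation for $(\ref{it:approx convex iff CSR i})\Rightarrow(\ref{it:approx convex iff CSR iii})$ --- lifting to the epigraph, using $(v,-1)\in\cncone{\epi f}(x,f(x))$, and absorbing the $\sqrt{1+L^2}$ factors via the local Lipschitz bound on $\|v\|$ and on $|f(y)-f(x)|$ --- is clean and makes the geometric content of Clarke super-regularity transparent at the function level; the paper's route is shorter but hides this translation inside the cited reference. One small bookkeeping point: you should also require $\delta'$ small enough that $\mathbb{B}_{\delta'}(\bar x)$ lies inside the neighborhood on which $L$ is a valid Lipschitz constant, so that both $\|v\|\le L$ and $|f(y)-f(x)|\le L\|y-x\|$ are justified; this is harmless since $\delta$ (and hence $\delta'$) may be shrunk at will.
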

\begin{proof}
	\eqref{it:approx convex iff CSR i} $\iff$ \eqref{it:approx convex iff CSR ii}:~Since $f$ is locally Lipschitz at $\bar{x}$, it is trivially a local Lipschitz representation of $\Omega=\epi f$ at $\bar{\omega}=(\bar{x},f(\bar{x}))\in\Omega$. The result thus follows from Proposition~\ref{prop:subsmooth epi}.
	\eqref{it:approx convex iff CSR ii} $\iff$ \eqref{it:approx convex iff CSR iii} $\iff$ \eqref{it:approx convex iff CSR iv}:~This  is \cite[Theorem~2]{DanGeo}.
\end{proof}

\subsection{Non-Lipschitzian functions}
In this section, we collect results which hold true without assuming local Lipschitz continuity.
\begin{proposition}
Let $f:\mathbb{R}^{n}\rightarrow\mathbb{R}$ be lower semicontinuous (lsc) and
approximately convex. Then $\epi f$ is Clarke-super regular. 
\end{proposition}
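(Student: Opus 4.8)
The plan is to reduce to the locally Lipschitz case, which is already settled by Theorem~\ref{th:approx convex iff CSR}. Fix a point $\bar x$ at which $f$ is approximately convex; I claim the hypotheses force $f$ to be locally Lipschitz around $\bar x$, after which the equivalence \eqref{it:approx convex iff CSR i} $\iff$ \eqref{it:approx convex iff CSR ii} of that theorem applies verbatim and yields that $\epi f$ is Clarke super-regular at $(\bar x, f(\bar x))$. Thus the entire content of the proof lies in upgrading approximate convexity to local Lipschitz continuity; this is essentially \cite{NgaLucThe00}, which I would cite, but it is worth recording the mechanism, since it also explains the role (and ultimate redundancy) of the lower semicontinuity hypothesis.

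For a fixed $\varepsilon>0$, let $\delta>0$ be the radius furnished by approximate convexity at $\bar x$. First I would bound $f$ above near $\bar x$: iterating the defining inequality gives a Jensen-type estimate $f\!\left(\sum_i \lambda_i x_i\right)\le \sum_i \lambda_i f(x_i)+\varepsilon C$ for convex combinations of points $x_i\in\mathbb{B}_\delta(\bar x)$, with $C$ controlled by their diameter; choosing the $x_i$ to be the vertices of a small simplex that contains $\bar x$ in its interior then bounds $f$ above by $\max_i f(x_i)+\varepsilon C$ on that simplex. For the lower bound I would use the midpoint instance $f(\bar x)\le\tfrac12 f(\bar x+h)+\tfrac12 f(\bar x-h)+\tfrac{\varepsilon}{2}\|h\|$, which rearranges to $f(\bar x+h)\ge 2f(\bar x)-M-\varepsilon\|h\|$; this already yields local boundedness from below for the real-valued $f$ without invoking semicontinuity, so lower semicontinuity is only a convenient (and here dispensable) alternative route to the same bound, via attainment of the infimum on a compact ball.

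With $m\le f\le M$ on $\mathbb{B}_{2r}(\bar x)$ for some $r$ with $2r\le\delta$, the Lipschitz estimate on $\mathbb{B}_r(\bar x)$ follows from the standard convex-analysis device: given $x,y\in\mathbb{B}_r(\bar x)$, extend the segment past $y$ to a point $z\in\partial\mathbb{B}_{2r}(\bar x)$, write $y=(1-t)x+tz$ with $t=\|y-x\|/\|z-x\|$ and $\|z-x\|\ge r$, and apply approximate convexity to obtain $f(y)-f(x)\le t\big(f(z)-f(x)\big)+\varepsilon\,t(1-t)\|z-x\|$. The decisive point — and the one delicate step — is that the error term obeys $\varepsilon\,t(1-t)\|z-x\|\le\varepsilon\,t\|z-x\|=\varepsilon\|y-x\|$, i.e.\ it scales linearly in $\|y-x\|$ rather than entering as a fixed additive constant; together with $t\big(f(z)-f(x)\big)\le\tfrac{M-m}{r}\|y-x\|$ this gives $f(y)-f(x)\le\big(\tfrac{M-m}{r}+\varepsilon\big)\|y-x\|$, and interchanging $x$ and $y$ produces the matching lower estimate. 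Hence $f$ is Lipschitz on $\mathbb{B}_r(\bar x)$, and I expect this rescaling of the error term to be the only genuine obstacle; once local Lipschitz continuity is in hand the conclusion is immediate from Theorem~\ref{th:approx convex iff CSR}.
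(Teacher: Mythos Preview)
Your proposal is correct and follows essentially the same route as the paper: reduce to the locally Lipschitz case via \cite[Proposition~3.2]{NgaLucThe00} (which you sketch rather than merely cite), and then invoke Theorem~\ref{th:approx convex iff CSR}. The paper's proof is just a two-line citation of exactly these two ingredients; your added observation that lower semicontinuity is in fact dispensable for a real-valued approximately convex function is a nice bonus, though not needed for the statement as given.
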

\begin{proof}
As a proper, lsc, approximately convex function is locally Lipschitz at each point in the interior of its domain \cite[Proposition~3.2]{NgaLucThe00} and $\dom f=\mathbb{R}^n$, the result follows from Theorem~\ref{th:approx convex iff CSR}.
\end{proof}

\begin{example}[Clarke super-regularity does not imply approximate convexity]
 Consider the counting function $f:\mathbb{R}^n\to\{0,1,\dots,n\}$ defined by
 \[ 
f(x)=\|x\|_0:= \sum_{j=1}^n |\mbox{sign}(x_j)|,\quad\text{where}\quad
\mbox{sign}(t):=\begin{cases} -1&\mbox{ for } t<0\\
                0&\mbox{ for } t=0\\
                +1&\mbox{ for } t>0\,.
               \end{cases}
\]
This function is lower-semicontinuous and Clarke epi-super-regular almost everywhere, but not locally Lipschitz at $x$ 
whenever $\|x\|_0<n$; \emph{a fortiori}, $f$ it is actually discontinuous at all such points.
Indeed, the epigraph of $f$ is \emph{locally convex} almost everywhere and, in particular, 
at any point $(x, \alpha)$ with $\alpha> f(x)$.  At the point  $(x, f(x))$ however, the epigraph is not even 
Clarke regular when $\|x\|_0<n$.  Nevertheless, it is $\epsilon$-subregular, for the limiting subgradient $0$ with 
$\epsilon=1$.  Conversely, if $x$ is any point 
with $\|x\|_0=n$, then the counting function is locally constant and so in fact locally convex. 
These observations agree nicely with those in \cite{Le12}, namely, that the rank function (a generalizaton
of the counting function) is subdifferentially regular everywhere (\emph{i.e.,} all the various subdifferentials coincide) 
with $0\in\partial \|x\|_0$ for all $x\in\mathbb{R}^n$.  
\end{example}

In order to state the following corollary, recall that an extended real-valued function $f$ is strongly amenable at $\bar{x}$ is if $f(\bar{x})$ is finite and there exists an open neighborhood $U$ of $\bar{x}$ on which $f$ has a representation as 
a composite $g\circ F$ with $F$ of class $\mathcal{C}^2$ and $g$ a proper, lsc, convex function on $\mathbb{R}^n$.

\begin{proposition}
Let $f:\mathbb{R}^{n}\rightarrow(-\infty,+\infty]$ and consider the following assertions.
\begin{enumerate}[(i)]
	\item\label{it:strongly amenable} $f$ is strongly amenable at $\bar{x}$.
	\item\label{it:prox regular} $f$ is prox-regular at $\bar{x}$.
	\item\label{it:CSR epi} $\epi f$ is Clarke super-regular at $(\bar{x},f(\bar{x}))$.
\end{enumerate}	
Then: \eqref{it:strongly amenable} $\implies$ \eqref{it:prox regular} $\implies$ \eqref{it:CSR epi}.
\end{proposition}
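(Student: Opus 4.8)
The plan is to establish the two implications separately, reducing each to a result that is already available---the first to a direct citation and the second to the set-level Corollary~\ref{cor:summary set regularies}. For \eqref{it:strongly amenable} $\implies$ \eqref{it:prox regular} I would simply invoke the classical fact recorded in Remark~\ref{r:amenability}: by \cite[Corollary~2.12]{PolRock96a}, a strongly amenable $f$ at $\bar x$ is prox-regular at $\bar x$ (in fact for every $\bar v\in\partial f(\bar x)$, and moreover subdifferentially continuous there). This step requires no computation beyond matching definitions.

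For \eqref{it:prox regular} $\implies$ \eqref{it:CSR epi}, the idea is to transfer prox-regularity from the function to its epigraph and then quote the set-level result. First I would use the epigraphical characterization of prox-regular functions, also due to Poliquin and Rockafellar \cite{PolRock96a}: $f$ is prox-regular at $\bar x$ for $\bar v$ (with subdifferential continuity) precisely when $\epi f$ is prox-regular as a set at $\bar\omega=(\bar x, f(\bar x))$ for the normal $(\bar v,-1)$. This yields prox-regularity of $\Omega=\epi f$ at $\bar\omega$ in the sense of Proposition~\ref{t:relationships}\eqref{t:relationships i}, namely single-valuedness of the projector on a neighborhood. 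Once $\epi f$ is known to be prox-regular as a set, Corollary~\ref{cor:summary set regularies}---specifically the chain \eqref{it:summary set regularities ii} $\implies$ \eqref{it:summary set regularities iii} $\iff$ \eqref{it:summary set regularities iv}, which routes through subsmoothness and Theorem~\ref{t:semi-subsmooth char}\eqref{th:superregular/subsmooth}---immediately gives that $\epi f$ is Clarke super-regular at $\bar\omega$, which is exactly \eqref{it:CSR epi}.

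The hard part is the reconciliation of the two notions of prox-regularity appearing in the argument. The set-level prox-regularity of Proposition~\ref{t:relationships}\eqref{t:relationships i} is the uniform ``single-valued projector'' version, whereas the functional prox-regularity of \eqref{it:prox regular} is attached to a fixed subgradient $\bar v$. Care must be taken that the passage to the epigraph delivers prox-regularity for all relevant normals $(\bar v,-1)$ with $\bar v\in\partial f(\bar x)$, so that the epigraphical projector is genuinely single-valued near $\bar\omega$; here subdifferential continuity---automatic under strong amenability and part of the standard prox-regularity hypotheses---is precisely what rules out the pathologies and makes the projector single-valued. Apart from verifying these definitional compatibilities, both implications then follow at once from the cited results and the previously established Corollary~\ref{cor:summary set regularies}.
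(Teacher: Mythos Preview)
Your proposal is correct and follows essentially the same route as the paper: cite \cite{PolRock96a} for \eqref{it:strongly amenable} $\implies$ \eqref{it:prox regular}, then pass from prox-regularity of $f$ to prox-regularity of $\epi f$ via \cite{PolRock96a} and conclude Clarke super-regularity through subsmoothness (Theorem~\ref{t:semi-subsmooth char}/Corollary~\ref{cor:summary set regularies}). The only cosmetic differences are the specific citations within \cite{PolRock96a} (the paper uses Proposition~2.5 and Theorem~3.5 rather than Corollary~2.12), and that you explicitly flag the compatibility issue between functional prox-regularity (for a fixed $\bar v$) and the set-level single-valued-projector notion---a point the paper passes over silently.
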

\begin{proof}
The fact that strong amenability implies prox-regularity is discussed in \cite[Proposition~2.5]{PolRock96a}. To see that \eqref{it:prox regular} implies \eqref{it:CSR epi}, suppose $f$ is prox-regular at $\bar{x}$. Then $\epi f$ is prox-regular at $(\bar{x},f(\bar{x}))$ by \cite[Theorem~3.5]{PolRock96a} and hence Clarke super-regular at $(\bar{x},f(\bar{x}))$ by Theorem~\ref{t:semi-subsmooth char}.
\end{proof}

To conclude, we establish a {\em primal} characterization of epi-subregularity analogous to the characterization of Clarke epi-super-regularity in Theorem~\ref{th:approx convex iff CSR}. It is worth noting that, unlike the results in Section~\ref{s:lip}, this characterization includes the  possibly of horizon normals.
In what follows, we denote the epigraph of a function $f$ restricted to a subset $\Lambda\subset\dom f$ by 
$\epi (f_\Lambda):= \{(x,\alpha)\in\epi f~|~x\in\Lambda\}.$
\begin{proposition}\label{t:elem reg epi f}
Consider a function $f:\mathbb{R}^n\to (-\infty,+\infty]$, let $\xbar\in\dom f$ and let
$(\xbar, \vbar)\in(\gph\csd f\cup \gph\csd_{\infty} f)$. Then the following assertions hold.
\begin{enumerate}[(i)] 
 \item\label{t:elem reg epi f i} $f$ has an $\epsilon$-subregular epigraph at 
 $\bar{x}\in\dom f$ relative to $\Lambda\subseteq\dom f$ 
 for $(\xbar, \vbar)$ if and only if for some constant $\veps>0$ 
 there is a neighborhood $U$ of $(\xbar, f(\xbar))$ such that,  
for all $(x, \alpha)\in \epi (f_\Lambda)\cap U$, one of the following two inequalities holds:
\begin{subequations}
\begin{align}
f(\xbar) + \langle \vbar,x-\xbar\rangle & \leq 
	\alpha + \veps\|\vbar\|\|x-\xbar\|\left(\left(1+\|\vbar\|^{-2}  \right)\left(1+|\alpha-f(\xbar)|^2\|x-\xbar\|^{-2} \right) \right)^{1/2},\label{eq:subregular fns} \\
	\langle \vbar,x-\xbar\rangle &\leq \veps\|\vbar\|\|x-\xbar\|\left(1+|\alpha-f(\xbar)|^2\|x-\xbar\|^{-2} \right)^{\frac{1}{2}}.\label{eq:subregular horizon fns}
\end{align}	
\end{subequations}	
\item\label{t:elem reg epi f iii} $f$ is epi-subregular at 
 $\bar{x}\in\dom f$ for $(\xbar, \vbar)$ relative to $\Lambda\subseteq\dom f$  if and only if for all $\varepsilon>0$ there is a neighborhood (depending on $\varepsilon$)
of $(\xbar, f(\xbar))$ such that, for all $(x, \alpha)\in \epi (f_\Lambda)\cap U$, either \eqref{eq:subregular fns} or \eqref{eq:subregular horizon fns} holds. 
\end{enumerate}
\end{proposition}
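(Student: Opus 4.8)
The plan is to peel off Definition~\ref{d:subregular fns} and reduce everything to an elementary computation in $\mathbb{R}^n\times\mathbb{R}$; the entire content is a coordinate rewriting of the subregularity inequality \eqref{e:esr}. First I would record what the hypothesis means here: by Definition~\ref{d:subregular fns}, $f$ has an $\epsilon$-subregular epigraph at $\xbar$ relative to $\Lambda$ for $(\xbar,\vbar)$ exactly when $\Omega:=\epi f$ is $\epsilon$-subregular at $\bar\omega:=(\xbar,f(\xbar))$ relative to $\epi(f_\Lambda)$ for the normal pair $\bigl(\bar\omega,(\vbar,e)\bigr)$, where $e=-1$ if $\vbar\in\csd f(\xbar)$ and $e=0$ if $\vbar\in\csd_{\infty} f(\xbar)$. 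The governing inequality is therefore \eqref{e:esr} with $\hat\omega=\bar\omega$ and $\hat\omega^{\ast}=(\vbar,e)$.

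The conceptual crux --- and what makes the statement clean --- is that the comparison set lies inside the epigraph: $\epi(f_\Lambda)\subseteq\epi f=\Omega$. Hence any admissible $\omega'\in\epi(f_\Lambda)\cap U$ already belongs to $\Omega$, so $P_\Omega(\omega')=\{\omega'\}$ and necessarily $\omega=\omega'$. Thus the term $\omega'-\omega$ in \eqref{e:esr} vanishes identically, and the defining inequality collapses to
\[
\ip{\hat\omega^{\ast}}{\omega-\hat\omega}\leq\epsilon\,\norm{\hat\omega^{\ast}}\,\norm{\omega-\hat\omega},\qquad\forall\,\omega=(x,\alpha)\in\epi(f_\Lambda)\cap U.
\]
Writing this out in coordinates, I would substitute $\hat\omega=(\xbar,f(\xbar))$, $\hat\omega^{\ast}=(\vbar,e)$ and $\omega=(x,\alpha)$ to obtain $\ip{\hat\omega^{\ast}}{\omega-\hat\omega}=\ip{\vbar}{x-\xbar}+e(\alpha-f(\xbar))$, together with $\norm{\hat\omega^{\ast}}=(\norm{\vbar}^2+e^2)^{1/2}$ and $\norm{\omega-\hat\omega}=(\norm{x-\xbar}^2+|\alpha-f(\xbar)|^2)^{1/2}$. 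For $e=-1$ I would transpose $\alpha-f(\xbar)$ to the right-hand side and then factor $\norm{\vbar}$ out of the first norm and $\norm{x-\xbar}$ out of the second, using $(\norm{\vbar}^2+1)^{1/2}=\norm{\vbar}(1+\norm{\vbar}^{-2})^{1/2}$ and the analogous identity for the second factor; this turns the product of the two norms into precisely the right-hand side of \eqref{eq:subregular fns}. The case $e=0$ is the identical computation with the vertical contribution of the inner product absent, and reproduces \eqref{eq:subregular horizon fns}. This settles \eqref{t:elem reg epi f i}; part \eqref{t:elem reg epi f iii} then follows verbatim by quantifying the equivalence over all $\epsilon>0$ with the neighborhood allowed to depend on $\epsilon$, exactly as subregularity is obtained from $\epsilon$-subregularity in Definition~\ref{Def-elem-subreg}.

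The one place requiring care --- the expected minor obstacle --- is the degeneracy of the factored right-hand sides when $\norm{x-\xbar}=0$ or $\norm{\vbar}=0$, where the factors $\norm{x-\xbar}^{-2}$ and $\norm{\vbar}^{-2}$ are undefined. I would handle these separately against the un-factored collapsed inequality, which is the form the factorizations encode wherever they are legitimate: when $x=\xbar$ it reads $e(\alpha-f(\xbar))\leq\epsilon\norm{\hat\omega^{\ast}}\,|\alpha-f(\xbar)|$ and holds trivially because $\alpha\geq f(\xbar)$ while $e\leq 0$, and when $\vbar=0$ it holds trivially as well. Apart from this bookkeeping, the proof is a single substitution once the projection has collapsed.
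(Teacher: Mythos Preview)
Your proof is correct and follows essentially the same route as the paper's: both reduce the epigraphical subregularity inequality \eqref{e:esr} to the simplified form $\langle\hat\omega^{\ast},\omega-\hat\omega\rangle\leq\epsilon\,\|\hat\omega^{\ast}\|\,\|\omega-\hat\omega\|$ and then unpack it in coordinates for the two cases $e=-1$ and $e=0$. You are in fact more explicit than the paper in justifying why the projection term $\omega'-\omega$ vanishes (since $\epi(f_\Lambda)\subseteq\Omega$ forces $\omega=\omega'$) and in handling the degenerate cases $x=\xbar$ and $\vbar=0$, which the paper's proof passes over silently.
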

\begin{proof}
\eqref{t:elem reg epi f i}:~First observe that since 
\begin{align*}
\ncone{\epi f}(\xbar) &\supseteq \left\{(v,-1)~|~ v\in\lsd f(\xbar)  \right\}
\cup\left\{(v,0)~|~ v\in\lsd_{\infty} f(\xbar)  \right\},\text{~and}\\
\cncone{\epi f}(\xbar) &\supseteq \left\{(v,-1)~|~ v\in\csd f(\xbar)  \right\}
\cup\left\{(v,0)~|~ v\in\csd_{\!\!\!\!\infty} f(\xbar)  \right\},
\end{align*}
any point $(\xbar, \vbar)\in(\gph\lsd f\cup \gph\lsd_{\infty} f)$ corresponds to either a normal vector 
of the form $(\vbar, -1)$ or a horizon normal of the form $(\vbar, 0)$.  Suppose first that $f$ is $\varepsilon$-epi-subregular at $\xbar$ relative to $\Lambda\subset\dom f$ for $\vbar\in\csd f(\xbar)$
with constant $\veps$ and neighborhood $U'$ of $\xbar$.  Then $\epi f$ is $\veps$-subregular 
 at $(\xbar, f(\xbar))$ relative to  $\epi (f_\Lambda)$ for $(\vbar, -1)\in \cncone{\epi f}(\xbar, f(\xbar))$ with 
 constant $\veps$ and neighborhood $U$ of $(\xbar,f(\xbar))$ in \eqref{e:esr}. Thus,  for all $(x,\alpha)\in \epi (f_\Lambda)\cap U$, we have
 \begin{align*}
\left\langle(\vbar, -1), (x,\alpha)-(\xbar,f(\xbar))  \right\rangle &\leq \veps\|(\vbar, -1)\|\|(x,\alpha)-(\xbar,f(\xbar))\|
\\
\iff \left\langle\vbar, x-\xbar  \right\rangle - \alpha+f(\xbar) &\leq \veps\left(\|\vbar\|^2+1\right)^{1/2}\left(\|x-\xbar\|^2+(\alpha-f(\xbar))^2\right)^{\frac{1}{2}}\\
&=\veps\|\vbar\|\|x-\xbar\|\left(1+\|\vbar\|^{-2}\right)^{\frac{1}{2}}\left(1+(\alpha-f(\xbar))^2\|x-\xbar\|^{-2}\right)^{\frac{1}{2}},
 \end{align*}
which from the claim follows.

The only other case to consider is that $f$ is $\varepsilon$-epi-subregular at $\xbar$ relative to 
$\Lambda\subset\dom f$ for $\vbar\in\csd_{\infty} f(\xbar)$
with constant $\veps$ and neighborhood $U'$ of $\xbar$.  In this case,   $\epi f$ is $\veps$-subregular 
 at $(\xbar, f(\xbar))$ relative to  $\epi (f_\Lambda)$ for $(\vbar, 0)\in \cncone{\epi f}(\xbar, f(\xbar))$ with  constant $\veps$ and neighborhood $U$ of $(\xbar,f(\xbar))$ in \eqref{e:esr}. Thus, for all $(x,\alpha)\in \epi (f_\Lambda)\cap U$, we have
 \begin{align*}
 	&&\left\langle(\vbar, 0), (x,\alpha)-(\xbar,f(\xbar))  \right\rangle &\leq \veps\|(\vbar, 0)\|\|(x,\alpha)-(\xbar,f(\xbar))\|	\\
 	&\iff& \left\langle\vbar, x-\xbar  \right\rangle &\leq \veps\|\vbar\|\left(\|x-\xbar\|^2+(\alpha-f(\xbar))^2\right)^{1/2}\\
 	&\iff &\left\langle\vbar, x-\xbar  \right\rangle&\leq  \veps\|\vbar\|\|x-\xbar\|\left(1+(\alpha-f(\xbar))^2\|x-\xbar\|^{-2}\right)^{1/2},
 \end{align*}
which completes the proof of \eqref{t:elem reg epi f i}. 

\eqref{t:elem reg epi f iii}:~Follows immediately from the definition. 
\end{proof}

\begin{remark}[indicator functions of subregular sets]\label{remark:iota subregular set}
When $f=\iota_\Omega$ for a closed set $\Omega$ the various subdifferentials coincide with the 
respective normal cones to $\Omega$.  In this case, inequality 
\eqref{eq:subregular horizon fns} subsumes \eqref{eq:subregular fns} since 
all subgradients are also horizon subgradients and \eqref{eq:subregular horizon fns} reduces to 
\eqref{e:esr} in agreement with the corresponding notions of regularity of sets.
\end{remark}

\bigskip

\noindent\textbf{Acknowledgments.}  The research of AD has been supported by the grants
AFB170001 (CMM) \& FONDECYT 1171854 (Chile) and MTM2014-59179-C2-1-P (MINECO of Spain). 
The research of DRL was supported in part by DFG Grant SFB755 and DFG Grant GRK2088. 
The research of MKT was supported in part by a post-doctoral fellowship from the Alexander von Humboldt Foundation.


\end{document}